\newcommand{\subeq}{\subseteq}
\newcommand{\ol}{\overline}
\newcommand{\bigp}[1]{\left( #1 \right)} 
\newcommand{\bigb}[1]{\left[ #1 \right]} 
\newcommand{\bigc}[1]{\left\{ #1 \right\}} 
\newcommand{\bE}{\mathbb E}
\newcommand{\bN}{\mathbb N}
\newcommand{\bP}{\mathbb P}
\newcommand{\bR}{\mathbb R}
\newtheorem{thm}{Theorem}[section]
\newtheorem{theorem}[thm]{Theorem}
\newtheorem{corollary}[thm]{Corollary}
\newtheorem{lemma}[thm]{Lemma}
\newtheorem{definition}[thm]{Definition}
\newtheorem{conjecture}[thm]{Conjecture}
\newtheorem*{claim}{Claim}
\newtheorem{remark}[thm]{Remark}
\DeclareMathOperator{\ept}{ept}
\DeclareMathOperator{\thpzf}{th_{pzf}}
\DeclareMathOperator{\ptpzf}{pt_{pzf}}
\title{Bounds on expected propagation time of probabilistic zero forcing}
\author{Shyam Narayanan}
\address[Shyam Narayanan]{Department of Mathematics, Massachusetts Institute of Technology}
\email{shyamsn@mit.edu}
\author{Alec Sun}
\address[Alec Sun]{Department of Mathematics, Harvard University}
\email{sundogx@gmail.com}
\begin{document}

\begin{abstract}
    Probabilistic zero forcing is a coloring game played on a graph where the goal is to color every vertex blue starting with an initial blue vertex set. As long as the graph $G$ is connected, if at least 1 vertex is blue then eventually all of the vertices will be colored blue. The most studied parameter in probabilistic zero forcing is the expected propagation time $\text{ept}(G)$. We significantly improve on upper bounds for $\text{ept}(G)$ by Geneson and Hogben and by Chan et al.\! in terms of a graph's order and radius. We prove the bound $\text{ept}(G) = O\left(r\log\frac{n}{r}\right).$ We also show using Doob's Optional Stopping Theorem that $\text{ept}(G) \le \frac{n}{2} + O(\log n).$ Finally, we derive an explicit lower bound $\text{ept}(G)\ge \log_2 \log_2 (2n).$
\end{abstract}

\maketitle

\section{Introduction}
Zero forcing is an iterative coloring process on a graph. The concept was introduced in order to attack the maximum nullity problem of combinatorial matrix theory \cite{max-null2, max-null, max-null3, max-null4}, as well as independently to study quantum system control \cite{quantum}. Let $G$ denote a graph with vertex set $V$ and edge set $E$. As is standard, define the \emph{order} of $G$ to be $\abs{V}$ and the \emph{radius} of $G$ to be the minimum graph eccentricity of a vertex $v\in V$. Denote the order of $G$ by $n$ and the radius of $G$ by $r$. In this paper, we will assume that $G$ is connected and that $n\ge 3$. Zero forcing is described by the following algorithm. Let each vertex of a graph $G$ be either blue or white. Denote by $S$ the initial set of blue vertices of $G.$ The zero forcing color change rule changes the color of a vertex $v$ from white to blue if $v$ is the only white neighbor of a blue vertex $u.$ In this case, we say that $u$ forces $v$ and we write $u\rightarrow v.$ The initial blue set $S$ is said to be \emph{zero forcing} if, after finitely many steps of the color change rule, all vertices of $G$ are forced to blue. The \emph{zero forcing number} of $G,$ denoted as $Z(G),$ is defined as the minimum cardinality of a zero forcing set of $G.$

Viewing zero forcing as a dynamical process on a graph, Chilakamarri et al.\! \cite{Chilakamarri}, Fallat et al.\! \cite{prop}, and Hogben et al.\! \cite{prop2} have studied the number of steps it takes for an initial vertex set to force all other vertices to blue. This is called the \emph{propagation time} of a zero forcing set. In the context of quantum systems, the propagation time is also called the \emph{graph infection number} \cite{inf}. Zero forcing was later found to have connections with power domination \cite{power-dom} and graph searching \cite{graph-search}.

\subsection{Probabilistic Zero Forcing} \label{intro}

Probabilistic zero forcing is a modified zero forcing process first proposed by Kang and Yi \cite{KY}. Given a current set $B$ of blue vertices, each vertex $u\in B$ attempts to force each of its white neighbors $v \in \ol{B}$ blue independently with probability $$\Pr[u\to v] = \frac{\abs{N[u]\cap B}}{\deg u},$$ where $N[u]$ denotes the closed neighborhood of $u$, that is, includes $u$. This is known as the \emph{probabilistic color change rule} \cite{KY}. Repeated applications of this color change rule is known as \emph{probabilistic zero forcing}. We remark that while classical zero forcing is a deterministic process, probabilistic zero forcing is randomized. Note that probabilistic zero forcing reduces to classical zero forcing when a blue vertex $v$ has exactly 1 white neighbor $w.$ In this case $w$ will be deterministically forced in zero forcing and forced with probability $1$ in probabilistic zero forcing.

Probabilistic zero forcing is a discrete dynamical system that may be able to model certain problems better than classical zero forcing. For instance, the authors of \cite{GH} note that zero forcing is sometimes used to model rumor spreading in social networks, but given sporadic human nature a probabilistic model is more realistic. The spread of infection among a population, or the spread of a computer virus in a network, is better modeled probabilistically as well. As noted in \cite{snipedround2}, probabilistic zero-forcing is very similar to the well-studied \emph{push} and \emph{pull} models for rumor spreading from theoretical computer science \cite{5e, 13e}. For the \emph{push} model, one starts with a set of blue vertices, and at each time step, each blue vertex chooses 1 neighbor independently and uniformly at random and forces that vertex blue, if that vertex is white. For the \emph{pull} model, at each time step each white vertex chooses a neighbor independently and uniformly at random, and the white vertex turns blue if the chosen neighbor is blue. The two models can also be combined to create a \emph{push and pull} model in which at each time step, blue vertices choose a random neighbor to force and white vertices choose a random neighbor to try to become blue.


Just as propagation time is studied in classical zero forcing, a natural parameter of interest is the \emph{expected propagation time} of a vertex set in probabilistic zero forcing, which we will now define. The propagation time of a nonempty set $S$ of vertices of $G,$ denoted as $\ptpzf(G, S),$ is a random variable that represents the time at which the last white vertex turns blue when applying a probabilistic zero forcing process starting with the set $S$ blue. For a set $S\subeq V$ of vertices, the expected propagation time of $S$ for $G$ is the expected value of the propagation time of $S$, namely $\ept(G, S) = \bE[\ptpzf(G, S)].$ We are especially interested in the case where $\abs{S} = 1,$ hence we define the expected propagation time $\ept(G)$ for $G$ as the minimum expected propagation time of a single vertex, namely $\ept(G) = \min_{v \in V} \bE[\ptpzf(G, \{v\})].$
Another parameter in the study of zero forcing is the \emph{throttling number}. Throttling was initially defined in \cite{throttle} in order to study the balance between resources used to accomplish a task and time needed to accomplish the task. In \cite{GH}, the throttling number of a set $S$ of vertices of $G$ is defined to be $\thpzf(G, S) = \abs{S} + \ept(G, S),$ and the throttling number of a graph $G$ is defined to be $\thpzf(G) = \min_{S \subseteq V} \bigc{\thpzf(G, S)}.$

\subsection{Previous Results}

    For functions $f$ and $g$, we write $f=O(g)$ if there exists some absolute constant $c$ such that $f \le cg$, $f = \Omega(g)$ if $g = O(f)$, and $f = \Theta(g)$ if $f = O(g)$ and $f = \Omega(g)$. We also write $f = o(g)$ if $\lim_{n \to \infty} \frac{f(n)}{g(n)} = 0$ and $f = \omega(g)$ if $g = o(f)$.

In \cite{GH}, the authors studied the expected propagation time $\ept(G)$ of various families of graphs and showed the following results.

\begin{theorem}[{\cite[Proposition 2.2, Theorem 2.7, Corollary 2.9, and Theorem 2.11]{GH}}] \label{GHThm}
Let $n \ge 3$.
\begin{enumerate}[label=(\alph*)]
    \item For $P_n$, the path graph on $n$ vertices, $$\ept(P_n) = \begin{cases} \frac{n}{2} + \frac{2}{3} & n\equiv 0\bmod 2 \\ \frac{n}{2} + \frac{1}{2} & n\equiv 1\bmod 2.\end{cases}$$
    \item For $K_{1, n}$ defined as the star graph with $n$ leaves, $\ept(K_{1, n}) = \Theta(\log n)$.
    \item For $K_n$, the complete graph on $n$ vertices, $\Omega(\log \log n) \le \ept(K_n) \le O(\log n).$ 
    \item We have $\ept(G) = O(r \log^2 n).$
\end{enumerate}
\end{theorem}

In \cite{sniped}, the authors used Markov chains to explicitly compute the expected propagation time of several graph families, including complete graphs, complete bipartite graphs, ``sun'' graphs, and ``comb'' graphs. In addition, they prove the following two results.
\begin{theorem}[{\cite[Theorem 3.1 and Theorem 3.4]{sniped}}] \label{snipedThm}
Let $n \ge 3.$
\begin{enumerate}[label=(\alph*)]
    \item For $K_n$, the complete graph on $n$ vertices, $\ept(K_n) = \Theta(\log \log n).$
    \item We have $\ept(G) \le \frac{e}{e-1}\cdot n.$
\end{enumerate}
\end{theorem}

Finally, in \cite{snipedround2}, the authors studied probabilistic zero forcing on Erd\H{o}s--R\'{e}nyi random graphs and proved the following result.

\begin{theorem}[{\cite[Theorem 1.3]{snipedround2}}] \label{snipedround2Thm}
    Let $G = G(n, p)$ denote the Erd\H{o}s--R\'{e}nyi random graph where each edge is independently in the graph with probability $p = p(n)$ with $p(n) = \omega\bigp{\frac{\log n}{n}}$. Then, for all vertices $v \in V,$ the following holds with probability approaching $1$ as $n\to \infty$:
\begin{align*}
    \ptpzf(G, v) &\le (1 + o(1)) \cdot \left(\log_2 \log_2 n + \log_3 (1/p)\right) \\
    \ptpzf(G, v) &\ge (1 - o(1)) \cdot \max\left(\log_2 \log_2 n, \log_4 (1/p)\right).
\end{align*}
\end{theorem}

\subsection{Paper Outline}

In Section \ref{prelim} we give some more definitions from \cite{GH} related to propagation time, and we also introduce the statistical tools that we will be using in the subsequent proofs.

In Section \ref{radius} we prove that $\ept(G) = O\bigp{r\log\frac{n}{r}}.$ This improves on the bound $\ept(G) = O(r\log^2 n)$ from Theorem \ref{GHThm} (d) due to Geneson and Hogben \cite{GH}. We also prove tightness of this bound up to a multiplicative constant for a family of graphs.

In Section \ref{size}, we prove the upper bound $\ept(G) \le \frac{n}{2} + O(\log n)$ for the expected propagation time.  This improves on the bound $\ept(G) \le \frac{e}{e-1}\cdot n$ of Theorem \ref{snipedThm} (b) due to Chan et al.\! \cite{sniped}, and is asymptotically tight up to a multiplicative factor of $1+o(1)$ for the path graph by Theorem \ref{GHThm} (a). We then prove an explicit lower bound $\ept(G) \ge \log_2 \log_2 (2n)$ for the expected propagation time of $G$. We also derive as a corollary that the throttling number has the lower bound $\thpzf(G)\ge \log_2 \log_2 (2n).$ This lower bound is tight up to a multiplicative constant for the complete graph $K_n$ by Theorem \ref{snipedround2Thm} (a).

In Section \ref{Conclusion}, we provide some further open problems and conjectures for expected propagation times of different graph families.

\subsection*{Acknowledgements}
This research was funded by NSF/DMS grant 1659047 and NSA grant H98230-18-1-0010 as part of the 2019 Duluth Research Experience for Undergraduates (REU) program. The authors thank Joseph Gallian for suggesting the problem. The authors are grateful to Joseph Gallian, Trajan Hammonds, and several anonymous reviewers for many suggestions that improved the presentation of this article. Finally, the authors especially thank an anonymous reviewer who went above and beyond the call of duty to provide detailed and thorough comments.

\section{Preliminaries}\label{prelim}

\subsection{Definitions}

For any subset $S \subeq V,$ denote by $G[S]$ the subgraph of $G$ restricted to the vertex set $S$.

\begin{definition}
    For an undirected graph $G$ of blue and white vertices, we define $\deg v$ as the total degree of any vertex $v \in V,$ $\deg_w v$ as the number of white neighbors of $v$, and $\deg_b v$ as the number of blue neighbors of $v$. For any subset $S \subeq G,$ define $\deg_S v$ as the number of neighbors of $v$ in $S$.
\end{definition}

Recall from Section \ref{intro} that $\ept(G, S)$ is the expected propagation time starting with $S$ blue and that $\ept(G) = \min_{v \in V} \ept(G, \{v\})$. In this paper, we are also concerned with the probability that some subset of vertices is blue at a particular time step $t$. Therefore, we make the following definitions, letting $S$ denote the initial blue set.

\begin{definition}
    For a subset $T \subeq V,$ define $P^{(t)}(G, S, T)$ as the probability that after $t$ steps of probabilistic zero forcing, all vertices in $T$ are blue. Define $P^{(t)}(G,S) = P^{(t)}(G,S,V)$ to be the probability that after $t$ steps, all vertices of $G$ are blue. Define $\ept(G, S, T)$ as the expected number of steps needed until all vertices in $T$ are blue, noting that $\ept(G, S) = \ept(G, S, V)$.
\end{definition}


\subsection{Tools from Probability Theory}

We will use some well-known concentration inequalities.

\begin{theorem}[Markov's Inequality]
    Given a non-negative random variable $X$ with expectation $\bE[X],$ we have for all $\lambda > 0$ that $$\bP(X \ge \lambda) \le \frac{\bE[X]}{\lambda}.$$
\end{theorem}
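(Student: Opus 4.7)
The plan is to use the standard indicator-function argument, which turns the tail probability on the left into an expectation that can be compared directly with $\BE[X]$. First I would introduce the indicator random variable $\one_{\{X \ge \lambda\}}$, which equals $1$ on the event $\{X \ge \lambda\}$ and $0$ otherwise.

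The key pointwise inequality I would establish is
\[\lambda \cdot \one_{\{X \ge \lambda\}} \le X\]
on the entire sample space. To verify this, I would split into two cases: on $\{X \ge \lambda\}$, the left side equals $\lambda$ and the right side is at least $\lambda$; on the complementary event $\{X < \lambda\}$, the left side is $0$ and the right side is nonnegative (this is where the hypothesis $X \ge 0$ is used).

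Next I would take expectations of both sides, using linearity and monotonicity of expectation, together with the fact that $\BE[\one_A] = \BP(A)$ for any event $A$. This yields $\lambda \cdot \BP(X \ge \lambda) \le \BE[X]$. Finally, dividing through by $\lambda > 0$ gives the desired bound. The only subtlety worth flagging is that the hypothesis $X \ge 0$ is essential for the pointwise inequality on $\{X < \lambda\}$; without it the argument fails, so I would make that step explicit rather than leaving it implicit.
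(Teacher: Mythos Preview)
Your proof is correct and is the standard textbook argument for Markov's inequality. Note that the paper does not actually supply a proof of this statement; it is listed in the preliminaries as a well-known result from probability theory, so there is no ``paper's own proof'' to compare against.
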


\begin{theorem}[Chebyshev's Inequality]
    Given a random variable $X$ with expectation $\bE[X]$ and variance $\mathrm{Var}(X),$ we have for all $\lambda > 0$ that $$\bP\bigp{\abs{X - \bE[X]} \ge \lambda} \le \frac{\mathrm{Var}(X)}{\lambda^2}.$$
\end{theorem}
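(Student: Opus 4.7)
The plan is to derive Chebyshev's Inequality as an immediate consequence of Markov's Inequality (just stated above) by passing from the absolute deviation to the squared deviation. The key observation is that squaring is monotone on $[0,\infty)$, so for $\lambda \ge 0$ the events $\{|X - \BE[X]| \ge \lambda\}$ and $\{(X - \BE[X])^2 \ge \lambda^2\}$ coincide.

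First I would set $Y := (X - \BE[X])^2$, which is a nonnegative random variable and is therefore eligible as input to Markov's Inequality, and note that $\BE[Y] = \mathrm{Var}(X)$ by the definition of variance. Assuming $\lambda > 0$ (the case $\lambda = 0$ is vacuous, since the stated right-hand side is undefined while the left-hand side is trivially at most $1$), I would then apply Markov's Inequality to $Y$ with threshold $\lambda^2$, obtaining
\[\BP\!\left(Y \ge \lambda^2\right) \;\le\; \frac{\BE[Y]}{\lambda^2} \;=\; \frac{\mathrm{Var}(X)}{\lambda^2},\]
and rewrite the event on the left using the equivalence above to conclude.

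There is no substantive obstacle here: the only point requiring care is the event equivalence, which is immediate from strict monotonicity of $t \mapsto t^2$ on $[0,\infty)$ together with the fact that $|X - \BE[X]| \ge 0$. No additional hypothesis beyond finiteness of $\mathrm{Var}(X)$ (implicit in the statement) is needed, and the argument is entirely self-contained given the preceding theorem.
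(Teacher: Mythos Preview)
Your argument is correct and is exactly the standard derivation of Chebyshev's Inequality from Markov's Inequality. The paper itself does not supply a proof of this statement---it is listed among the preliminary ``Useful Theorems from Probability Theory'' and simply quoted as a known result---so there is nothing to compare against; your write-up would serve perfectly well as the missing proof.
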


\begin{theorem}[Chernoff Bound]
    Let $X_1, \dots, X_n$ be independent random variables taking values in $\{0, 1\}$. Let $X = \displaystyle \sum _{i=1}^n X_i$ and denote $\mu = \bE[X]$. Then for any $\delta > 0$, we have
\[\bP(X > (1+\delta) \mu) < \left(\frac{e^{\delta}}{(1+\delta)^{1+\delta}}\right)^{\mu}.\]
    If $0 < \delta < 1,$ then
\[\bP(X < (1-\delta) \mu) < \left(\frac{e^{-\delta}}{(1-\delta)^{1-\delta}}\right)^{\mu}.\]
\end{theorem}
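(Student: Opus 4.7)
The plan is to prove both inequalities via the standard Chernoff moment technique, which combines Markov's inequality (stated just above) with the moment generating function. For the upper tail, I would fix any $t > 0$ and note that by monotonicity of $x \mapsto e^{tx}$ the event $\{X > (1+\delta)\mu\}$ coincides with $\{e^{tX} > e^{t(1+\delta)\mu}\}$. Applying Markov's inequality to the nonnegative random variable $e^{tX}$ then gives
\[
\BP(X > (1+\delta)\mu) \le \frac{\BE[e^{tX}]}{e^{t(1+\delta)\mu}}.
\]

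Next, I would exploit independence of the $X_i$ to factor the moment generating function as $\BE[e^{tX}] = \prod_{i=1}^n \BE[e^{tX_i}]$. Writing $p_i = \BP(X_i = 1)$, a direct computation yields $\BE[e^{tX_i}] = 1 + p_i(e^t - 1) \le \exp(p_i(e^t - 1))$, where the inequality uses the elementary fact $1 + x \le e^x$. Multiplying across $i$ and using $\mu = \sum_i p_i$ gives the clean bound $\BE[e^{tX}] \le \exp((e^t - 1)\mu)$, so
\[
\BP(X > (1+\delta)\mu) \le \exp\bigl((e^t-1)\mu - t(1+\delta)\mu\bigr).
\]

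Finally, I would optimize the free parameter $t$. Differentiating the exponent in $t$ shows that the right-hand side is minimized at $t = \ln(1+\delta)$, which is positive for $\delta > 0$; substituting and simplifying recovers the stated upper-tail bound $\left(e^\delta/(1+\delta)^{1+\delta}\right)^\mu$, which is a routine algebraic manipulation. The lower tail is handled symmetrically: take $t < 0$ so that $\{X < (1-\delta)\mu\} = \{e^{tX} > e^{t(1-\delta)\mu}\}$, run the same factorization and the same $1+x \le e^x$ step, and then optimize at $t = \ln(1-\delta)$, which is negative precisely when $0 < \delta < 1$. The only real obstacle is the algebraic bookkeeping during the optimization step; the three conceptual ingredients — Markov, MGF factorization via independence, and a single-variable calculus optimization — are all classical.
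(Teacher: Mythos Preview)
Your argument is correct and is exactly the standard Chernoff moment-generating-function proof. However, the paper does not supply its own proof of this statement: the Chernoff bound is listed in the preliminaries alongside Markov's and Chebyshev's inequalities as a well-known concentration inequality and is simply quoted for later use. So there is nothing to compare against; your write-up would serve perfectly well if a self-contained proof were desired.
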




We will also need some results from martingale theory. First, we state some definitions.

\begin{definition}
    A sequence of random variables $M_0,M_1,\ldots$ with finite absolute means is called a \emph{martingale} with respect to another sequence of random variables $X_0,X_1,\ldots$ if for all $n,$ $M_n$ is a function of $X_0,X_1\ldots,X_n$ and $\bE[M_{n+1}\mid X_0,X_1,\ldots,X_n] = M_n.$
    The sequence $M_0,M_1,\ldots$ is called a \emph{submartingale} if all conditions are the same, except $\bE[M_{n+1}\mid X_0,X_1,\ldots,X_n] \ge M_n.$
    The sequence $M_0,M_1,\ldots$ is called a \emph{supermartingale} if all conditions are the same, except $\bE[M_{n+1}\mid X_0,X_1,\ldots,X_n] \le M_n.$
\end{definition}

\begin{definition}[Stopping Time]
    A random variable $T$ taking values in $\{0,1,2,\ldots\}$ is called a \emph{stopping time} with respect to $X_0,X_1,\ldots$ if for each $n,$ the indicator of the event $T\le n$ is a measurable function of $X_0,X_1,\ldots,X_n.$ That is, $\{T\le n\}\in \sigma(X_0,\ldots,X_n)$ for all $n.$
\end{definition}

\begin{remark}
A random variable $T$ being a stopping time means that it is known at time $n$ whether $T\le n.$ In other words, one cannot look at the ``future'' variables $X_{n+1}, X_{n+2}, \dots$ to decide whether or not to stop.
\end{remark}

We use a formulation of Doob's Optional Stopping Theorem found in \cite{textbook}.

\begin{theorem}[Doob's Optional Stopping Theorem]
    Suppose that $M_n$ is a martingale with respect to $X_n$ and that $T$ is a stopping time with respect to $X_n.$ Suppose that there exists a constant $c>0$ such that $\abs{M_n-M_{n-1}}\le c$ for all $n$ and further assume that $\bE[T]<\infty.$ Then $\bE[M_T] = \bE[M_0].$ If $M_n$ is a submartingale and all else is equal, then $\bE[M_T] \ge \bE[M_0]$, and if $M_n$ is a supermartingale and all else is equal, then $\bE[M_T] \le \bE[M_0].$
\end{theorem}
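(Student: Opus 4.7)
The plan is to prove Doob's Optional Stopping Theorem by analyzing the stopped process $M_{n \wedge T}$, where $n \wedge T := \min(n, T)$. First I would establish that the stopped sequence $(M_{n \wedge T})_{n \ge 0}$ is itself a martingale with respect to $(X_n)$. Writing the one-step increment as
\[M_{(n+1) \wedge T} - M_{n \wedge T} = (M_{n+1} - M_n) \cdot \one\{T > n\},\]
and using both the martingale property of $(M_n)$ and the fact that $\{T > n\} = \{T \le n\}^c \in \sigma(X_0,\ldots,X_n)$ (which is exactly the defining property of a stopping time), the conditional expectation of this increment given $X_0,\ldots,X_n$ vanishes. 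By induction, $\EE[M_{n \wedge T}] = \EE[M_0]$ for every finite $n$.

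Next I would pass to the limit as $n \to \infty$. Since $\EE[T] < \infty$ forces $T < \infty$ almost surely, the sequence $M_{n \wedge T}$ converges pointwise to $M_T$ a.s. To interchange the limit with the expectation I would apply the dominated convergence theorem: the uniform bound $\abs{M_n - M_{n-1}} \le c$ telescopes to
\[\abs{M_{n \wedge T} - M_0} \le c \cdot (n \wedge T) \le c T,\]
so $\abs{M_{n \wedge T}}$ is dominated by the integrable random variable $\abs{M_0} + c T$. Hence $\EE[M_T] = \lim_{n \to \infty} \EE[M_{n \wedge T}] = \EE[M_0]$, proving the equality case.

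For the sub- and supermartingale variants, the argument is essentially unchanged, except that the vanishing of the conditional expectation of the stopped increment is replaced by a one-sided inequality (nonnegative in the submartingale case, nonpositive in the supermartingale case). This gives $\EE[M_{n \wedge T}] \ge \EE[M_0]$ (respectively $\le$) for every $n$, and passing to the limit via the same dominated convergence argument transfers the inequality to $\EE[M_T]$.

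The main obstacle is precisely the limit interchange: it is possible only because the hypothesis $\abs{M_n - M_{n-1}} \le c$ combines with $\EE[T] < \infty$ to yield the integrable dominating random variable $\abs{M_0} + c T$. Standard counterexamples, such as the simple symmetric random walk stopped at the first hitting time of $1$ (where $\EE[T] = \infty$) or unbounded-increment martingales with bounded expected hitting times, show that neither hypothesis can be dropped, so any correct proof must deploy them jointly at exactly this step rather than separately.
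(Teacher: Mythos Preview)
Your proof is correct and is the standard textbook argument for this version of the Optional Stopping Theorem. However, the paper does not actually prove this statement: it is listed in the preliminaries section as a known result from probability theory, with only a footnote pointing to Grimmett and Stirzaker's textbook \cite{textbook} as a reference. So there is no ``paper's own proof'' to compare against; your write-up simply supplies the omitted classical argument, and the key step you identify---using the bounded increments together with $\EE[T]<\infty$ to dominate $|M_{n\wedge T}|$ by the integrable variable $|M_0|+cT$---is exactly why the paper insists on both hypotheses when it later applies the theorem to the submartingales and supermartingales arising from the zero forcing process.
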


\subsection{Coupling Results}

We will need some ``coupling results'' about probabilistic zero forcing, where we show that probabilistic zero forcing processes terminate more quickly than certain modified probabilistic zero forcing processes. We call these results ``coupling results'' because their proofs involve a technique called coupling. The idea behind coupling is that if one wants to prove that $\bE[X_1] \le \bE[X_2]$ for some random variables $X_1$ and $X_2,$ one defines random variables $Y_1$ and $Y_2$ such that $X_1$ and $Y_1$ have the same distribution and $X_2$ and $Y_2$ have the same distribution, but $Y_1 \le Y_2$ always. The variables $Y_1$ and $Y_2$ are defined by properly correlating the randomness used in creating $X_1$ and in creating $X_2.$ 

In \cite{GH}, the authors proved the following result:

\begin{lemma}[{\cite[Proposition 4.1]{GH}}] \label{GenesonCoupling}
    Suppose that $S \subeq T$. Then, $P^{(\ell)}(G, S) \le P^{(\ell)}(G, T).$ As an immediate corollary, we have $\ept(G, S) \ge \ept(G, T).$
\end{lemma}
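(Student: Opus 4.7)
The plan is to prove the lemma by an explicit coupling of the two probabilistic zero forcing processes, one starting from $S$ and one starting from $T \supseteq S$, in such a way that the blue set of the first process is always contained in the blue set of the second process.

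Concretely, I would introduce, for every ordered pair of adjacent vertices $(u,v)$ and every time step $t \ge 1$, an independent uniform $[0,1]$ random variable $U_{u,v,t}$. Let $B_S^{(t)}$ and $B_T^{(t)}$ denote the blue sets after $t$ steps when starting from $S$ and $T$ respectively. In the $S$-process, I would declare that at step $t$ a blue vertex $u \in B_S^{(t-1)}$ forces a white neighbor $v \notin B_S^{(t-1)}$ exactly when $U_{u,v,t} \le |N[u] \cap B_S^{(t-1)}|/\deg u$, and analogously for the $T$-process with $B_T^{(t-1)}$. Marginally, each process then has the correct distribution under the probabilistic color-change rule, since in the actual rule the different force attempts at a given step are mutually independent.

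The key step is to show by induction on $t$ that $B_S^{(t)} \subseteq B_T^{(t)}$ almost surely. The base case $t=0$ is immediate. For the inductive step, suppose $B_S^{(t-1)} \subseteq B_T^{(t-1)}$ and take any $v \in B_S^{(t)}$. If $v \in B_S^{(t-1)}$ we are done by hypothesis, so assume $v$ is newly forced in the $S$-process; then there is some $u \in B_S^{(t-1)}$ with $(u,v) \in E$ and
\[
U_{u,v,t} \le \frac{|N[u] \cap B_S^{(t-1)}|}{\deg u} \le \frac{|N[u] \cap B_T^{(t-1)}|}{\deg u},
\]
where the second inequality uses the inductive containment. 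Since $u \in B_T^{(t-1)}$ as well, either $v$ was already in $B_T^{(t-1)}$ (and hence in $B_T^{(t)}$), or $v$ is still white in the $T$-process at time $t-1$, in which case the coupling declares that $u$ forces $v$ and so $v \in B_T^{(t)}$. Either way $v \in B_T^{(t)}$, completing the induction.

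With this containment established, the event $B_S^{(\ell)} = V$ implies $B_T^{(\ell)} = V$, so $P^{(\ell)}(G,S) \le P^{(\ell)}(G,T)$. The corollary follows by writing $\ept(G,S) = \sum_{\ell \ge 0} \bigl(1 - P^{(\ell)}(G,S)\bigr)$ and comparing termwise. The only real obstacle is making sure the coupling is defined in a way that reproduces the correct marginal distribution for each process while exposing the monotonicity; once the forcing-rule probabilities are made a monotone function of the current blue set (which they are, since $|N[u] \cap B|/\deg u$ is monotone in $B$), the induction goes through cleanly.
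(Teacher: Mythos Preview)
Your proposal is correct and is essentially the same coupling argument the paper has in mind: the paper cites this lemma from \cite{GH} without printing a proof, but the LaTeX source contains a commented-out proof that assigns i.i.d.\ uniform variables $X_{(u,v),t}$ to directed edges, runs both processes on the shared randomness, and proves $S^{(t)}\subseteq T^{(t)}$ by the same induction you give; moreover, the proof of the closely related Lemma~\ref{OurCoupling} is written out in full and follows exactly this template. Your write-up of the inductive step and of the tail-sum formula for $\ept$ matches theirs, so there is nothing to add.
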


Our paper requires a stronger result than Lemma \ref{GenesonCoupling}.

\begin{lemma} \label{OurCoupling}
    Suppose that initially, some set $S \subeq V$ is blue. Say that we follow some modified probabilistic process where at the $t^{\text{th}}$ step, $\bP_t[u \to v]$, the probability that $u$ converts $v$ to become blue at step $t$, is some function of $G, u, v,$ and $B_{t-1},$ the set of blue vertices after the $(t-1)^{\text{th}}$ step. In addition, suppose that $$\bP_t[u \to v] \le \frac{\abs{N[u] \cap B_{t-1}}}{\deg u}$$ for all blue vertices $u$ and white neighbors $v$ of $u$, and that conditioned on $u, v,$ and $B_{t-1}$, the set of events $u \to v$ is independent. Then, for any $T \subeq V$ and any $\ell \ge 1$, the probability that all vertices in $T$ are blue after time step $\ell$ is at most $P^{(\ell)}(G, S, T),$ the probability that all vertices in $T$ would be blue if we followed the normal probabilistic zero forcing process. Consequently, the expected amount of time until all vertices in $T$ are blue is at least $\ept(G, S, T).$
\end{lemma}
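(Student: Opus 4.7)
The plan is to proceed by a coupling argument analogous to the one sketched for Lemma \ref{GenesonCoupling}, except that both processes start from the same set $S$ and the comparison is between the true probabilistic zero forcing thresholds and the modified thresholds $\BP_t[u \to v]$. For each directed edge $(u, v)$ of $G$ and each time step $t \in \{1, \ldots, \ell\}$, I would draw an independent uniform random variable $X_{(u,v), t} \sim \mathrm{Unif}[0, 1]$. The standard probabilistic zero forcing process is realized by declaring that $u$ forces a white neighbor $v$ at step $t$ iff $X_{(u,v),t} \le |N[u] \cap B^{\mathrm{std}}_{t-1}|/\deg u$, where $B^{\mathrm{std}}_{t-1}$ is the standard blue set after step $t-1$. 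The modified process is realized on the same probability space by declaring that $u \to v$ at step $t$ iff $X_{(u,v), t} \le \BP_t[u \to v]$, with the threshold computed from the modified blue set $B^{\mathrm{mod}}_{t-1}$. The conditional independence hypothesis in the statement is exactly what is needed to guarantee that this coupling produces the correct joint distribution of the modified process.

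The key claim, proved by induction on $t$, is that $B^{\mathrm{mod}}_t \subeq B^{\mathrm{std}}_t$ for all $0 \le t \le \ell$. The base case is immediate since both sets equal $S$ at time $0$. For the inductive step, take $v \in B^{\mathrm{mod}}_t$; if $v$ already belongs to $B^{\mathrm{mod}}_{t-1}$ then by induction $v \in B^{\mathrm{std}}_{t-1} \subeq B^{\mathrm{std}}_t$, so one may assume that $v$ was forced at step $t$ by some $u \in B^{\mathrm{mod}}_{t-1}$. This means $X_{(u,v),t} \le \BP_t[u \to v]$, which by the hypothesis of the lemma is at most $|N[u] \cap B^{\mathrm{mod}}_{t-1}|/\deg u$, and the inductive hypothesis upgrades this further to $|N[u] \cap B^{\mathrm{std}}_{t-1}|/\deg u$. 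If $v \notin B^{\mathrm{std}}_{t-1}$, then $u \in B^{\mathrm{std}}_{t-1}$ (again by induction) and the same $X_{(u,v),t}$ witnesses that $v \in B^{\mathrm{std}}_t$ in the standard process; otherwise $v \in B^{\mathrm{std}}_{t-1} \subeq B^{\mathrm{std}}_t$ directly.

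Once the containment $B^{\mathrm{mod}}_t \subeq B^{\mathrm{std}}_t$ is established for all $t \le \ell$, the statement follows immediately: whenever every vertex of $T$ is blue in the modified process at time $\ell$, the same holds in the standard process, so the probability of the former event is at most $P^{(\ell)}(G, S, T)$. The expected propagation time inequality is then read off from the identity $\EE[\tau] = \sum_{\ell \ge 0} \BP(\tau > \ell)$ applied to the time at which $T$ becomes entirely blue. The only real subtlety beyond Lemma \ref{GenesonCoupling} is that the modified thresholds are defined relative to $B^{\mathrm{mod}}_{t-1}$ rather than $B^{\mathrm{std}}_{t-1}$, which is why the inductive hypothesis must be used twice: once to upgrade the threshold to the standard blue set, and once to ensure that the forcing vertex $u$ is itself blue in the standard process. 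Beyond this bookkeeping I do not anticipate any essential obstacle.
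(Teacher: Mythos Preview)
Your proposal is correct and follows essentially the same coupling argument as the paper: both introduce independent $\mathrm{Unif}[0,1]$ variables indexed by directed edges and time steps, realize each process by thresholding these variables, and prove by induction that the modified blue set is contained in the standard blue set at every time. The only cosmetic difference is that the paper runs the induction in contrapositive form (showing that a vertex white in the standard process stays white in the modified one), whereas you argue the direct inclusion; your explicit remark about needing the inductive hypothesis twice is exactly the bookkeeping the paper handles via the chain $|N[u]\cap S_{t-1}| \ge |N[u]\cap B_{t-1}| \ge \deg u \cdot \BP_t[u\to v]$.
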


\begin{proof}
    Let $Q^{(\ell)}(G, S, T)$ be the probability that all vertices of $T$ are blue for the modified probabilistic process. The idea is to assign some common randomness to both the normal probabilistic zero forcing process and the modified process, and then show that if we condition on the randomness, the set of blue vertices in the modified process is a subset of the blue vertices in the normal probabilistic zero forcing process after each step. Let $e \in V\times V$ be a directed edge of $G$, where we let $(u, v)$ and $(v, u)$ be directed edges if the undirected edge $(u, v)$ is in $G$. For each directed edge $e$ and each time step $t$, we will create a random variable $X_{e, t} \sim \text{Unif} [0, 1]$. Each $X_{e, t}$ will be independent and uniformly distributed between $0$ and $1$.
    
    Consider the following modified process. At each step $t$ and for each edge $e = (u, v),$ if $u$ is blue and $v$ is white after step $t-1$, we will convert $v$ to blue if $X_{e, t} \le \bP_t[u \to v]$. In other words, $v$ will be blue after time step $t$ if $v$ was blue after time step $t-1,$ or there is some directed edge $(u, v)$ such that $u$ was blue at time step $t-1$ and $X_{(u, v), t} \le \bP_t[u \to v].$ Likewise, in the normal probabilistic zero forcing process, if $u$ is blue and $v$ is white, we will convert $v$ to blue if $$X_{(u, v), t} \le \frac{\abs{N[u] \cap S_{t-1}}}{\deg u},$$ where $S_{t-1}$ is the set of blue vertices in the normal process after the $(t-1)^{\text{th}}$ step. Since $$\bP(X_{e, t} \le \bP_t[u \to v]) = \bP_t[u \to v]$$ and $$\bP\left(X_{(u, v), t} \le \frac{\abs{N[u] \cap S_{t-1}}}{\deg u}\right) = \frac{\abs{N[u] \cap S_{t-1}}}{\deg u},$$ the processes we are following indeed are correct. Therefore, it suffices to show that $B_{t} \subeq S_{t}$ for all $t$. We prove this by induction. For $t = 0,$ $B_t = S_t = S,$ so it is clear. If true at some time step $t-1$, then we have to show that if $v$ is blue after time step $t$ in the modified process, then it is also blue in the original process. For any $v$ that is white after step $t$ in the normal process, $v \not\in S_{t},$ so $v \not\in S_{t-1},$ which means $v \not\in B_{t-1}$ by our induction hypothesis. Then, $$X_{(u, v), t} > \frac{\abs{N[u] \cap S_{t-1}}}{\deg u} \ge \frac{\abs{N[u] \cap B_{t-1}}}{\deg u}$$ for all $u \in B_{t-1}$ connected to $v$, or else $v$ would become blue at step $t$ of the normal process. Therefore, $X_{(u, v), t} > \bP_t[u \to v]$ for all $u \in B_{t-1}$ connected to $v$, so $v$ remains white after step $t$. This completes the induction. Since $Q^{(\ell)}(G, S, T) = \bP(T \subeq B_{\ell})$ and $P^{(\ell)}(G, S, T) = \bP(T \subeq S_{\ell}),$ we have that $Q^{(\ell)}(G, S, T) \le P^{(\ell)}(G, S, T)$ since $B_{\ell} \subeq S_{\ell}.$
    
    If we let $\ept_Q(G, S, T)$ denote the expected time until all vertices are blue in the modified process, then by the tail sum formula for expectation, we have $$\ept(G, S, T) = \sum_{\ell = 0}^{\infty} \left(1 - P^{(\ell)}(G, S, T)\right)$$
    and
$$\ept_Q(G, S, T) = \sum_{\ell = 0}^{\infty} \left(1 - Q^{(\ell)}(G, S, T)\right).$$
    We conclude that $\ept_Q(G, S, T) \ge \ept(G, S, T)$.
\end{proof}

We remark that the statement and proof of Lemma \ref{OurCoupling} are similar to those of \cite[Lemma 2.2]{snipedround2}.

\section{Radius bound for general graphs}\label{radius}

Our goal in this section is to prove the following theorem.

\begin{theorem}\label{radius-bound}
    We have $\ept(G) = O\left(r \log \frac{n}{r}\right)$.
\end{theorem}

We begin by proving some lemmas about probabilistic zero forcing on a star graph.

\begin{lemma} \label{LowIncrease}
    Let $H$ be a star graph with $n\ge 2$ leaves, with its center and exactly $k$ leaves colored blue and all other vertices colored white. Then, if $k \le \frac{n}{3},$ the number of leaves that will turn blue in the next step will be at least $\frac{k+1}{6}$ with probability at least $\frac{1}{5}.$
\end{lemma}

\begin{proof}
    Note that the subsequent step, each white leaf will become blue with probability $\frac{k+1}{n}.$ Thus, the number of leaves that will turn blue in the subsequent step, which we denote by the random variable $X$, has distribution $\text{Bin}\big(n-k, \frac{k+1}{n}\big).$ Note that since $k \le \frac{n}{3},$ then $$\bE[X]\ge \frac{2n}{3} \cdot \frac{k+1}{n} = \frac{2(k+1)}{3}.$$ We have
\[\mathrm{Var}(X) = (n-k) \cdot \frac{k+1}{n} \cdot \left(1 - \frac{k+1}{n}\right) \le (n-k) \cdot \frac{k+1}{n} = \bE(X).\] By Chebyshev's Inequality, we have
\[\bP\left(X < \frac{k+1}{6}\right) \le \frac{\mathrm{Var}(X)}{\bigp{\bE[X] - \frac{k+1}{6}}^2} \le \frac{\mathrm{Var}(X)}{\frac{9}{16} \bE[X]^2} \le \frac{16}{9\bE[X]} \le \frac{8}{3(k+1)} \le \frac{8}{21}\]
if $k\ge 6$. Therefore, if $k\ge 6$ the number of blue vertices increases by at least $\frac{k+1}{6}$ with probability at least $\frac{13}{21} \ge \frac{1}{5}.$ For the remaining cases $0 \le k \le 5,$ we use the following argument. We have
\[\bP(X = 0) = \bP\left(\text{\text{Bin}}\left(n-k, \frac{k+1}{n}\right) = 0\right) = \left(1 - \frac{k+1}{n}\right)^{n-k} \le e^{-(k+1) \cdot (n-k)/n}.\]
    Note that since $k+1 \ge 1$ and $\frac{n-k}{n} \ge \frac{2}{3},$ we have $e^{-(k+1) \cdot (n-k)/n} \le e^{-2/3}.$ Therefore, the number of blue vertices increases by at least $1 \ge \frac{k+1}{6}$, because $k\le 5$, with probability at least $1 - e^{-2/3} \ge \frac{1}{5}.$
\end{proof}

\begin{lemma} \label{HighIncrease}
    Again, let $H$ be a star graph with $n$ leaves, with its center and exactly $k$ leaves colored blue and all other vertices colored white. Then, if $k \ge \frac{n}{3},$ the number of leaves that will turn blue in the next step will be at least $\frac{n-k}{6}$ with probability at least $\frac{1}{5}.$
\end{lemma}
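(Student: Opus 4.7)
The plan is to follow the structure of Lemma \ref{LowIncrease}. Let $m = n - k$ and $p = (k+1)/n$. Since the center is blue and is the unique neighbor of each white leaf, each of the $m$ white leaves is independently forced blue in the next step with probability $p$; the number $X$ of leaves turning blue is therefore $\text{Bin}(m, p)$-distributed. The hypothesis $k \ge n/3$ gives $p > 1/3$, so $\EE[X] = mp > m/3$ and $\text{Var}(X) = mp(1-p) \le \tfrac{2}{3}\EE[X]$.

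I would split on the magnitude of $m$. When $m \ge 10$, Chebyshev's Inequality gives
$$\BP\!\left(X < \tfrac{m}{6}\right) \le \frac{\text{Var}(X)}{(\EE[X]-m/6)^2} = \frac{p(1-p)}{m(p-1/6)^2}.$$
A short calculus check shows the right-hand rational function of $p$ is decreasing on $[1/3, 1]$ (its derivative has numerator $-(2p/3 + 1/6)$ after simplification), attaining its maximum at $p = 1/3$ with value $8/m$. For $m \ge 10$ this yields $\BP(X < m/6) \le 4/5$, equivalently $\BP(X \ge m/6) \ge 1/5$.

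The remaining case $m \le 9$ is handled by integer-valuedness of $X$ together with direct computation. For $m \le 6$, the inequality $X \ge m/6$ reduces to $X \ge 1$, and
$$\BP(X \ge 1) = 1 - (1-p)^m \ge 1 - (2/3)^m \ge 1/3 \ge 1/5.$$
For $m \in \{7, 8, 9\}$, we require $X \ge 2$, and since $\BP(X \ge 2) = 1 - (1-p)^{m-1}(1 + (m-1)p)$ is monotone increasing in $p$ on $[0, 1]$, it suffices to verify the bound at the worst case $p = 1/3$, which is a routine binomial calculation giving probabilities well above $1/5$ in all three sub-cases (the smallest being roughly $0.74$ at $m=7$).

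Conceptually nothing new beyond Lemma \ref{LowIncrease} is required; the main effort is purely numerical, namely pinning down the threshold $m = 10$ where Chebyshev closes, verifying the monotonicity claim for $p(1-p)/(p-1/6)^2$, and checking the handful of small-$m$ binomial inequalities. The only mild subtlety is that, unlike Lemma \ref{LowIncrease} where $\BP(X=0)$ can be bounded cleanly by a single exponential, here for $m \in \{7,8,9\}$ we need a two-term binomial tail bound to rule out $X \le 1$.
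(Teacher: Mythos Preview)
Your proof is correct, but it takes a substantially more laborious route than the paper. You mirror the structure of Lemma \ref{LowIncrease} (Chebyshev for large $m$, direct tail bounds for small $m$), which forces you into a calculus check on $p(1-p)/(p-1/6)^2$ and a handful of binomial computations for $m\le 9$.

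The paper instead exploits a one-line trick: apply Markov's inequality to the \emph{complementary} count $Y=m-X$, the number of white leaves that \emph{remain} white. Since each leaf stays white with probability $1-p\le 2/3$, we have $\BE[Y]\le \tfrac{2}{3}m$, and Markov gives
\[
\BP\!\left(Y\ge \tfrac{5}{6}m\right)\le \frac{\tfrac{2}{3}m}{\tfrac{5}{6}m}=\frac{4}{5},
\]
so $\BP(X\ge m/6)\ge 1/5$ in one stroke, with no case split on $m$ and no monotonicity check in $p$. The asymmetry between the two lemmas is that in the high-$k$ regime the \emph{failure} probability $1-p$ is bounded away from $1$, so Markov on $Y$ is already tight enough; in the low-$k$ regime of Lemma \ref{LowIncrease} this does not work and one genuinely needs the second moment. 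Your approach is valid and self-contained, but the paper's observation that the two regimes call for different first-moment arguments makes Lemma \ref{HighIncrease} essentially free.
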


\begin{proof}
    Since $k \ge \frac{n}{3},$ each white leaf will turn blue with probability $\frac{k+1}{n} \ge \frac13.$ Therefore, the expected number of white leaves that remain white is at most $\frac{2}{3} (n-k),$ so by Markov's Inequality, the probability of there remaining at least $\frac{5}{6} (n-k)$ leaves that are white is at most $\frac{2(n-k)/3}{5(n-k)/6} = \frac{4}{5}.$ Thus, at least $\frac{n-k}{6}$ of the leaves will become blue with probability at least $\frac{1}{5}.$
\end{proof}

\begin{lemma} \label{StarExponentialTailBound}
    Let $H$ be a star graph with $n\ge 2$ leaves, with its center colored blue and all other vertices colored white. There exist explicit constants $C > 0$ and $0<\alpha<1$, independent of $n$, such that the blue vertex will propagate to all the leaves in $t$ steps with probability at least $1-\alpha^t,$ whenever $t > C \log n$.
\end{lemma}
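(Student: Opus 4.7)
The plan is to classify each step of the process as \emph{good} or \emph{bad}, deduce a deterministic geometric bound on the number of good steps needed to finish, and then apply a Chernoff tail bound to the number of good steps in $t$ rounds. Call step $i$ \emph{good} if either (a) there were $k \le n/3$ blue leaves at the start of the step and the event of Lemma \ref{LowIncrease} occurred, so the new blue count $k'$ satisfies $k' + 1 \ge \tfrac{7}{6}(k+1)$; or (b) there were $k \ge n/3$ blue leaves at the start of the step and the event of Lemma \ref{HighIncrease} occurred, so $n - k' \le \tfrac{5}{6}(n-k)$. By those two lemmas, a step is good with conditional probability at least $1/5$, regardless of the prior history.

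Next I would count the good steps required to finish. Starting from $k = 0$, type-(a) good steps multiply $k+1$ by at least $7/6$, so after $m_1 = \lceil \log_{7/6}(n/3+1)\rceil = O(\log n)$ good steps we have $k > n/3$ and enter phase (b). Type-(b) good steps multiply $n-k$ by at most $5/6$, so after $m_2 = O(\log n)$ more good steps, $n - k$ falls below $1$; by integer-valuedness, an additional $O(1)$ good steps push $n-k$ all the way to $0$ (on each such step the integer count of new blue leaves is at least $(n-k)/6 > 0$, hence $\ge 1$). Thus there is an explicit constant $C_0 > 0$ such that $T \le C_0 \log(n+1)$ good steps always suffice to force the star.

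Finally, because every step is good with conditional probability $\ge 1/5$, a standard coupling shows that the number of good steps among the first $t$ rounds stochastically dominates $\mathrm{Bin}(t, 1/5)$. For $t \ge 10 C_0 \log(n+1)$ we have $T \le t/10$, so applying the Chernoff bound with $\delta = 1/2$ and $\mu = t/5$ yields
\[\BP[\text{fewer than $T$ good steps in $t$ rounds}] \;\le\; \BP\!\left[\mathrm{Bin}(t, 1/5) < t/10\right] \;\le\; \left(\tfrac{2}{e}\right)^{t/10} \;=\; \alpha^t,\]
for the explicit constant $\alpha = (2/e)^{1/10} < 1$. Taking $C = 10 C_0$ then gives the claimed bound $\BP[\text{not fully blue after $t$ steps}] \le \alpha^t$ for all $t > C\log(n+1)$.

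The main delicate point is the tail of phase (b), where the guarantee ``at least $(n-k)/6$ new blue leaves'' from Lemma \ref{HighIncrease} drops below $1$; one has to note that the number of newly blue leaves is a nonnegative integer and is at least $(n-k)/6 > 0$ on a good step, so $n-k$ still strictly decreases and phase (b) terminates in $O(\log n)$ good steps rather than only shrinking $n-k$ geometrically. The other ingredients---the geometric counting and the Chernoff tail---are routine.
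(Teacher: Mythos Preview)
Your proof is correct and follows essentially the same approach as the paper: both arguments use Lemmas~\ref{LowIncrease} and~\ref{HighIncrease} to show that $O(\log n)$ ``successful'' steps suffice to finish (the paper phrases this via an explicit partition of $[0,n]$ into $O(\log n)$ intervals through which the blue count advances, whereas you track the multiplicative invariants $k+1$ and $n-k$ directly), and both conclude with the identical Chernoff bound on $\mathrm{Bin}(t,1/5)$, yielding the same constant $\alpha = (2/e)^{1/10}$.
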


\begin{proof}
    We partition the interval $[0, n]$ into subintervals as follows. Let $I_1 = [0, 1).$ If $I_j = [a_j, b_j)$ for $b_j < \frac{n}{3},$ we set $I_{j+1} = \Big[b_j, b_j + \frac{b_j+1}{6}\Big) \cap \Big[0, \frac{n}{3}\Big).$ If $I_J = [a_J, b_J)$ for $b_J = \frac{n}{3},$ then we set $I_{J+r} = \Big[n - \frac{2n}{3 \cdot 6^{r-1}}, n - \frac{2n}{3 \cdot 6^r} \Big)$ whenever $\frac{2n}{3 \cdot 6^{r-1}} \ge 1.$ For the least value $R$ such that $\frac{2n}{3 \cdot 6^{R-1}} < 1,$ we set $I_{J+R} = \Big[n - \frac{2n}{3 \cdot 6^{R-1}}, n\Big]$ to be the final interval. 
    
    Note that $I_1, \dots, I_J$ partition $\left[0, \frac{n}{3}\right)$ and $I_{J+1}, \dots, I_{J+R}$ partition $\left[\frac{n}{3}, n\right]$ so we have a complete partition. Moreover, it is straightforward to verify that $J \le C_1 \log n$ and $R \le C_2 \log n$ for some constants $C_1, C_2$, so $J+R \le (C_1+C_2) \log n.$ Also, note that by Lemma \ref{LowIncrease}, if the number of blue leaves is $k \in I_r$ for $r \le J,$ with probability at least $1/5$ the number of blue leaves will be in some $I_s$ for $s > r.$ Moreover, by Lemma \ref{HighIncrease}, the same is true for $J+1 \le r \le J+R-1.$
    
    Since $n$ is the only integer in $I_{J+R},$ the probability that all $n$ vertices are blue after $t$ steps is at least the probability that a random walk, that moves right with probability $\frac{1}{5}$ and is stationary otherwise, moves at least $J+R \le (C_1+C_2) \log n$ to the right after $t$ steps. Letting $C_3 = C_1+C_2,$ the probability that this random walk moves at least $J+R$ to the right after $t$ steps is at least $\bP(\text{\text{Bin}}(t, 1/5) \ge C_3 \log n).$ If $t\ge 10 C_3 \log n$ then $\bE[\text{\text{Bin}}(t, 1/5)] \ge 2 C_3 \log n,$ so by the Chernoff Bound,
\begin{align*}
\bP\left(\text{\text{Bin}}\left(t, \frac{1}{5}\right) < C_3 \log n\right) 
\le \bP\left(\text{\text{Bin}}\left(t, \frac{1}{5}\right) < \frac{t}{10}\right)
\le \left(\frac{e^{-\frac{1}{2}}}{(\frac{1}{2})^{\frac{1}{2}}}\right)^{t/5} = \left(\frac{2}{e}\right)^{t/10}.
\end{align*}

    Therefore, if we set $\alpha = (2/e)^{1/10}$ and $C = 10C_3,$ the probability that all leaves are blue after $t > C \log n$ steps is at least $1 - \alpha^{t}.$
\end{proof}

The next lemma contains most of the ingredients with which we can finish the main theorem of this section.

\begin{lemma} \label{RadiusBash}
    Fix a blue vertex $v$ of $G$. Let $w \neq v$ be some other vertex such that the shortest distance between $v$ and $w$ is $s$. Then there exist explicit constants $C, C' > 0$ and $0 < \beta < 1$ such that after $t + C' s + C s \log \frac{n}{s}$ steps, $w$ will be blue with probability at least $1 - \beta^t$.
\end{lemma}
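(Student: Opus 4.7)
The plan is to propagate along a shortest $v$--$w$ path one hop at a time, using Lemma \ref{StarExponentialTailBound} at each step, and then apply a Chernoff bound to concentrate the total running time.

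First I would fix a shortest path $v = u_0, u_1, \dots, u_s = w$ and establish the \emph{degree-sum bound} $\sum_{i=0}^{s} \deg u_i \le 3n$. Since the path is a geodesic, no two non-consecutive $u_i$'s are adjacent, and any $x \notin \{u_0,\dots,u_s\}$ is adjacent to at most three path vertices (if $x$ were adjacent to $u_i$ and $u_j$ with $j - i \ge 3$, then routing through $x$ would produce a strictly shorter path). Counting edges incident to the path this way yields $\sum_{i} \deg u_i \le 2s + 3(n - s - 1) \le 3n$, and concavity of $\log$ then gives $\sum_{i=0}^{s-1} \log(\deg u_i + 1) \le C_1 s \log(n/s) + C_2 s$ for absolute constants $C_1, C_2$.

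Next I would use Lemma \ref{OurCoupling} to pass to a slower \emph{path-phase} process: at each step $t$, let $j$ be the smallest index with $u_j \notin B_{t-1}$, allow only $u_{j-1}$ to attempt to force its white neighbors (at the usual probability $|N[u_{j-1}] \cap B_{t-1}|/\deg u_{j-1}$), and set $\BP_t[u \to v] = 0$ for every other directed edge. The modified process splits into $s$ phases, where phase $j$ ends the first time $u_j$ becomes blue. Within phase $j$ only $u_{j-1}$ forces, so the dynamics on $\{u_{j-1}\} \cup N(u_{j-1})$ is precisely probabilistic zero forcing on a star centered at $u_{j-1}$ (with possibly some leaves already blue from previous phases, which can only speed things up by Lemma \ref{GenesonCoupling}). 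Mirroring the interval partition used in the proof of Lemma \ref{StarExponentialTailBound} and invoking Lemmas \ref{LowIncrease} and \ref{HighIncrease}, each step of phase $j$ moves the count of blue leaves into the next interval with conditional probability at least $1/5$, and phase $j$ terminates after $N_j = O(\log(\deg u_{j-1} + 1))$ such advances. Setting $K := \sum_{j=1}^{s} N_j$, the degree-sum bound gives $K = O(s \log(n/s) + s)$.

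Finally, since the randomness $X_{e,t}$ driving the coupling is independent across time steps, the advancement events stochastically dominate a sequence of i.i.d.\ $\mathrm{Bernoulli}(1/5)$ variables, and the total number of advances within $T$ steps stochastically dominates $\mathrm{Bin}(T, 1/5)$. Choosing $T = 10K + t$, which has the desired form $t + C's + Cs\log(n/s)$, a standard Chernoff lower-tail bound gives $\BP(\mathrm{Bin}(T, 1/5) < K) \le e^{-t/40}$, and setting $\beta = e^{-1/40}$ finishes the argument. The main obstacle is this concentration step: a naive union bound over the $s$ phases produces a tail of the form $s\alpha^{t/s}$, which is not of the desired form $\beta^t$. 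The fix is to build a constant-factor slack ($10K$ over the expected waiting time $5K$) into $T$ \emph{before} the $t$-dependent term, which pins $\delta \ge 1/2$ in Chernoff's inequality and makes the exponent genuinely linear in $t$.
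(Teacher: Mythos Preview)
Your argument is correct, and the overall architecture matches the paper's: fix a shortest path, run a slowed-down process that handles the path one hop at a time, reduce each hop to the star tail bound, and then use concavity of $\log$ to convert a degree sum into $s\log(n/s)$. The two proofs differ in two technical choices, and it is worth seeing what each buys.

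First, the degree-sum step. You work with the full neighborhoods and use the geodesic property to get $\sum_i \deg u_i \le 3n$. The paper instead defines $K_i$ to be the \emph{new} neighbors of $v_i$ (those not already adjacent to some earlier $v_j$), so the $K_i$ are disjoint and $\sum_i k_i \le n$ is immediate. The paper's phase $i$ then waits until every vertex in $K_0,\dots,K_{i-1}$ is blue; at that point every neighbor of $v_i$ outside $K_i$ is already blue, so $v_i$'s true forcing probability is at least $(1+|K_i\cap B|)/k_i$, i.e.\ the probability for a star with only $k_i$ leaves. This makes the comparison to Lemma~\ref{StarExponentialTailBound} slightly cleaner and avoids your geodesic counting, at the cost of a more elaborate phase definition.

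Second, the concentration step. You pool all the interval-advances across phases into a single stream of events that are each conditionally $\ge 1/5$, and hit the total with one Chernoff bound. The paper instead bounds the moment generating function $\BE[\alpha^{-(X_{i+1}-X_i)/2}\mid X_0,\dots,X_i]$ for each phase using the tail from Lemma~\ref{StarExponentialTailBound}, multiplies these bounds telescopically to control $\BE[\alpha^{-X_s/2}]$, and then applies Markov's inequality. Your route is arguably more elementary, since it never leaves the $\mathrm{Bin}(T,1/5)$ picture already set up inside Lemma~\ref{StarExponentialTailBound}; the paper's MGF route is more modular, since it only uses the \emph{statement} of Lemma~\ref{StarExponentialTailBound} (the $1-\alpha^t$ tail) rather than reopening its proof. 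Either way the same $\beta^t$ tail with $t$ of the right form falls out.
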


\begin{proof}
    Choose some path $v = v_0, v_1, v_2, \dots, v_s = w$ such that $(v_{i-1}, v_i)$ is an edge for all $1 \le i \le s.$ For $0 \le i \le s,$ let $K_i$ denote the set of neighbors of $v_i$ that were not neighbors of $v_j$ for any $j < i$, and let $k_i = \abs{K_i}$. For $1 \le i \le s,$ let $X_i$ denote the amount of time it takes until all of $v_0, v_1, \dots, v_i$, as well as all neighbors of $v_0, \dots, v_{i-1}$ all become blue, and set $X_0 = 0$. Moreover, let $S_t$ be the set of blue vertices after time $t$.
    
    For $0 \le i \le s-1,$ consider the graph after $X_i$ steps, so $S_{X_i}$ is the set of blue vertices at this time. Consider a process where at time $X_i+t,$ if $w \in K_i$ but $w$ is white, we convert $w$ to blue with probability $\frac{1}{k_i}\cdot \bigp{1 + \abs{K_i \cap S_{X_i+t-1}}}$. Since these forcing probabilities match those in probabilistic zero forcing for a star graph with center $v_i$ and $k_i$ leaves which are the vertices in $K_i,$ by Lemma \ref{StarExponentialTailBound} and Lemma \ref{GenesonCoupling}, after $X_i + C \log k_i + t$ steps, all vertices in $K_i$ will be blue with probability at least $1 - \alpha^t,$ where $C, \alpha$ are the same as in Lemma \ref{StarExponentialTailBound}.
    
    However, in actual probabilistic zero forcing, at time step $X_i+t$ for $t \ge 1$, vertex $v_i$ will convert all of its white neighbors to blue with probability $$\frac{\abs{N[v_i] \cap S_{X_i+t-1}}}{\deg v_i} = \frac{1 + \abs{K_i \cap S_{X_i+t-1}} + (\deg v_i - k_i)}{k_i + (\deg v_i - k_i)} \ge \frac{1+\abs{K_i \cap S_{X_i+t-1}}}{k_i},$$ as all neighbors of $v_i$ not in $K_i$ were already blue by time $X_i$. Moreover, there may be additional vertices that are converting the vertices in $K_i$ to blue with some probability. Therefore, by Lemma \ref{OurCoupling}, after $X_i + C \log k_i + t$ steps, all neighbors of $v_i,$ including $v_{i+1}$ will be blue with probability at least $1 - \alpha^t,$ even if we condition on $X_i$ and $S_{X_i}.$ Therefore, for all $t \ge 0,$ $$\bP\left(X_{i+1}-X_i \le C \log k_i + t \mid X_i, S_{X_i}\right) \ge 1 - \alpha^t.$$
    Note that we can even condition on all previous $X_j$ for $j < i,$ since given $X_i$ and $S_{X_i},$ $X_{i+1}$ is independent of $X_0, \dots, X_{i-1}.$ To finish, we note that
\begin{align*}
\bE\left[\alpha^{-(X_{i+1}-X_i)/2} \mid X_0, \dots, X_i\right] &\le \alpha^{-C \log k_i/2} \cdot \left(1 + \sum\limits_{t = 1}^{\infty} \bP(X_{i+1}-X_i = C \log k_i + t) \alpha^{-t/2}\right) \\
&\le \alpha^{-C \log k_i/2} \cdot \left(1 + \sum\limits_{t = 1}^{\infty} \alpha^{t-1} \alpha^{-t/2}\right)
\\&= C_1 \alpha^{-C \log k_i/2}
\end{align*}
    for $$C_1 = 1 + \sum_{t = 1}^{\infty} \alpha^{t-1} \alpha^{-t/2} < \infty.$$ This means that
\begin{align*}
\bE\left[\alpha^{-X_s/2}\right] &\le \prod\limits_{i = 1}^{s} \bE\left[\alpha^{-(X_i-X_{i-1})/2} \mid X_0, \cdots, X_{i-1}\right] \\
&\le C_1^s \cdot \alpha^{-C (\sum \log k_i)/2} \\
&= \alpha^{-\left(\frac{s \log C_1}{\log(1/\alpha) } + \frac{C}{2} \bigp{\sum_i \log k_i}\right)}.
\end{align*}
    If we set $C' = 2\frac{\log C_1}{\log (1/\alpha)}$ and $\beta = \sqrt{\alpha}$, then by Markov's Inequality,
\begin{align*}
\bP\left(X_s \ge C' s + C \sum \log k_i + t\right) &= \bP\left(\alpha^{-X_s/2} \ge \alpha^{-(C' s + C \sum_i \log k_i + t)/2}\right) \\
&\le \frac{\bE\left[\alpha^{-X_s/2}\right]}{\alpha^{-(C' s + C \sum \log k_i)/2} \cdot \alpha^{-t/2}} \\&\le \alpha^{t/2} \\&= \beta^t.
\end{align*}
    No vertex can be in more than a single $K_i,$ so $\sum_i k_i \le n.$ Therefore, by Jensen's Inequality we have $\sum_i \log k_i \le s \log \frac{n}{s}.$ This concludes the proof of the lemma, as after time $X_s,$ $w$ is blue.
\end{proof}

We now finish the proof.

\begin{proof}[Proof of Theorem \ref{radius-bound}]
    Choose a starting vertex $v$ such that all vertices $w$ are of distance at most $r$ from $v$. For any fixed $w$ of distance $s$ from $v$, we have that after $Cs \log \frac{n}{s} + C's + t$ steps, $w$ will be blue with probability at least $1 - \beta^t$ by Lemma \ref{RadiusBash}. We note that $x \log \frac{n}{x}$ is increasing for $x \in \left[1, \frac{n}{e}\right)$ and decreasing on $\left(\frac{n}{e}, n\right],$ so if $r \le \frac{n}{e},$ then 
\[C\cdot s \log \frac{n}{s} + C's \le C\cdot r \log \frac{n}{r} + C' r \le (C+C')\cdot  r \log \frac{n}{r}.\]
If $\frac{n}{e} \le r \le \frac{n}{2},$ then $r \log \frac{n}{r} \ge \frac{n}{2} \log 2$, so
\[C\cdot s \log \frac{n}{s} + C's \le C \cdot \frac{n}{e}\cdot  \log\bigp{ \frac{n}{n/e}} + C'\cdot  \frac{n}{2} \le \frac{C+C'}{2}\cdot  n \le \frac{C+C'}{\log 2}\cdot  r \log \frac{n}{r}.\]
    Therefore, there is some constant $C_2 = \frac{C+C'}{\log 2}$ such that for any vertex $w$, $w$ will be blue with probability at least $1 - \beta^t$ after $C_2 \cdot r \log \frac{n}{r} + t$ steps. After $C_2 \cdot r \log \frac{n}{r} + \frac{\log n}{\log (1/\beta)} + t$ steps, each vertex $w$ will be blue with probability at least $1 - \frac{\beta^t}{n}$, so the entire graph will be blue with probability at least $1 - \beta^t.$ As $\beta < 1$ is a fixed constant, we have
\[\ept(G) \le C_2 \cdot r \log \frac{n}{r} + \frac{\log n}{\log (1/\beta)} + O(1) = O\left(r \log \frac{n}{r}\right),\]
    as desired.
\end{proof}

We end with a proof that the bound in Theorem \ref{radius-bound} is tight for the following family of graphs.

\begin{theorem}\label{rtight}
    There exists an absolute constant $c > 0$ such that for all positive integers $s, n$ with $2s+1\mid n$, one can find an example of a graph $G$ with radius $r = s+2$ such that $\ept(G) \ge c\cdot r\log\frac{n}{r}.$
\end{theorem}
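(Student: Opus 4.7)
The plan is to construct the extremal graph $G$ explicitly and analyze it. Take $G$ to be the path on $2r+1$ vertices $u_{-r}, u_{-r+1}, \ldots, u_r$ together with the remaining $n-2r-1$ vertices distributed as leaves attached as evenly as possible among the $2r-1$ interior path vertices $u_{-r+1}, \ldots, u_{r-1}$; let $k_i$ denote the number of leaves at $u_i$. The first step is to verify that $u_0$ is the unique center of eccentricity $r$: the farthest vertices from $u_0$ are $u_{\pm r}$ and the leaves of $u_{\pm(r-1)}$, all at distance $r$, while every other path vertex $u_j$ has eccentricity at least $|j|+r > r$ and every leaf has eccentricity at least $r+1$. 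Hence $G$ has exactly $n$ vertices and radius $r$.

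The heart of the argument is a uniform lower bound on $\ept(G, \{v\})$ over all starting vertices $v$. If $v$ is a leaf, it forces its unique path neighbor deterministically in one step, so up to an additive constant I may assume $v = u_j$ for some $j$, and by left-right symmetry that $j \ge 0$. The crucial structural observation is that a leaf of $u_i$ has only $u_i$ as a neighbor and so cannot help force $u_{i-1}$; therefore coloring $u_{-r}$ blue requires sequentially forcing $u_{j-1}, u_{j-2}, \ldots, u_{-r}$. Letting $T_i$ denote the first time $u_i$ is blue and $\tau_i = T_{i-1} - T_i$, this yields
\[
\ept(G,\{v\}) \ge \EE[T_{-r}] = \sum_{i=-r+1}^{j} \EE[\tau_i],
\]
a sum of at least $r$ nonnegative terms.

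Next I would prove $\EE[\tau_i] = \Omega(\log(n/r))$ whenever $n$ is sufficiently large relative to $r$, say $n \ge Cr$ for a constant $C$; in this regime each $k_i = \Theta(n/r)$. Fix $i$ and let $X_s$ denote the number of blue vertices in $N[u_i]$ at time $T_i + s$. At time $T_i$ all of $u_i$'s leaves are white, so $X_0 = 2$ (namely $u_i$ and the previously-blue path neighbor). A direct computation from the color change rule gives $\EE[X_{s+1} \mid X_s] \le 2 X_s$, hence $\EE[X_s] \le 2^{s+1}$. Since $u_{i-1}$ can only be forced by $u_i$, the per-step forcing probability is $X_{s-1}/(k_i+2)$, and a union bound yields
\[
\BP[\tau_i \le s] \le \sum_{t=1}^s \frac{\EE[X_{t-1}]}{k_i+2} \le \frac{2^{s+1}}{k_i+2}.
\]
Choosing $s = \lfloor \log_2((k_i+2)/4) \rfloor$ makes this at most $1/2$, so $\EE[\tau_i] \ge (s+1)/2 = \Omega(\log(n/r))$. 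Summing over the at least $r$ hops then gives $\ept(G,\{v\}) = \Omega(r \log(n/r))$.

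I expect the main obstacle to be precisely the lower bound on $\EE[\tau_i]$: one must rule out that $u_{i-1}$ gets ``lucky'' and is forced well before the local star around $u_i$ saturates with blue leaves. The expectation-doubling bound $\EE[X_s] \le 2^{s+1}$ combined with a union bound over the first $\log_2(k_i/4)$ steps handles this cleanly, showing that the cumulative forcing probability over this many steps is at most $1/2$. Finally, to cover the remaining small-leaf regime $2r+1 \le n < Cr$, I would observe that $\log(n/r)$ is a bounded constant there, so the trivial bound $\ept(G) \ge r$ coming from the radius of $G$ already yields $\ept(G) \ge cr\log(n/r)$ for a sufficiently small absolute constant $c$.
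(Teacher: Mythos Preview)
Your proof is correct, and the construction is essentially the same as the paper's (a path of $2r+1$ vertices with stars attached at the interior nodes); in fact your placement of leaves only on the interior vertices $u_{-r+1},\ldots,u_{r-1}$ makes the radius exactly $r$, while the paper's version, which hangs leaves on all $2r+1$ centers, actually has radius $r+1$ and glosses over this.

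Where you genuinely diverge from the paper is in the analysis of each hop $\tau_i$ and in the aggregation step. The paper invokes an external concentration result (Theorem~2.7 of \cite{GH}) to argue that with probability at least $1/3$ the next path vertex stays white for $\Omega(\log(n/r))$ steps, and then applies a Chernoff bound over the $r$ hops to get a high-probability statement. You instead use the first-moment bound $\EE[X_{s+1}\mid X_s]\le 2X_s$, giving $\EE[X_s]\le 2^{s+1}$, and combine this with a union bound on the forcing probability of $u_{i-1}$ to obtain $\EE[\tau_i]=\Omega(\log(n/r))$ directly; aggregation is then just linearity of expectation. Your route is more self-contained and elementary --- no need to cite \cite{GH} or run a Chernoff argument --- while the paper's route yields the stronger conclusion that the propagation time is $\Omega(r\log(n/r))$ with exponentially high probability, not just in expectation. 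For the theorem as stated, your argument is entirely sufficient and arguably cleaner. Your handling of the regime $2r+1\le n< Cr$ via the trivial radius bound $\ept(G)\ge r$ is also fine and is not made explicit in the paper.
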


\begin{proof}
    Construct the following graph $G$:
    \begin{enumerate}
        \item Generate $2s+1$ identical star graphs $G_{-s}, G_{-s+1}, \ldots, G_s$ each with $\frac{n}{2s+1}$ vertices.
        \item Arrange the centers $C_{-s},C_{-s+1},\ldots,C_{s}$ of the star graphs in a line and connect the vertices $C_j$ and $C_{j+1}$ with an edge for all $j = -s,-s+1,\ldots,s-1.$
    \end{enumerate}
    Note that a leaf of $G_0$ minimizes the eccentricity, hence $r = s+2$. We will lower bound the expected propagation time with an initial vertex $v$ colored blue. Without loss of generality suppose that $v$ is a vertex of $G_i$ with $i \le 0$.
    \begin{claim}
    For a star on $n$ vertices with a distinguished leaf $\ell,$ suppose the initial blue vertex set is either the center only or the center and a leaf that is not $\ell$. Then the probability that it takes $\Omega(\log n)$ time for $\ell$ to be turned blue is at least $\frac{1}{3}.$
    \end{claim}
    To prove the claim, we follow the same argument as \cite[Theorem 2.7]{GH}. We do the case where only the center is colored blue first. The proof of \cite[Theorem 2.7]{GH} tells us that given $b$ current blue vertices where $\sqrt{n} \le b \le \frac{n}{2}$, with probability at least $1-O\bigp{\frac{1}{\sqrt{n}}}$ the next step will have at most $5b$ blue vertices. Hence using this proof, with probability at least $1-o(1)$ it takes $\Omega(\log n)$ steps for the number of blue vertices to increase from below $\sqrt{n}$ to a value in the range $\bigb{\frac{n}{10}, \frac{n}{2}}.$ At this point, by symmetry of the star graph, with probability at least $\frac{1}{2}$ the leaf $\ell$ will not be colored blue. The case where the initial blue vertex set consists of the center and a leaf that is not $\ell$ is done using the same argument with $b=2$ initially rather than $b=1$.
    
    Returning to the proof of Theorem \ref{rtight}, consider the stars $G_i,G_{i+1},\ldots,G_s$ in sequence. Note that the vertex $C_k$ must be colored blue before $C_{k+1}$ is colored blue. For each graph $G_k$ for $k<s$, set $\ell$ in the claim to be $C_{k+1},$ which is a vertex of $G_k.$ If $v$ is a leaf of $G_i$, then after a single step both $v$ and $C_i$ will be colored blue. Otherwise, we are in the initial condition of the claim. For each step of the process that colors $C_i, C_{i+1}, \ldots, C_r$ blue in sequence, the claim tells us that with probability at least $\frac{1}{3}$ a total of $\Omega\bigp{\log\frac{n}{2s+1}}$ steps are needed for $C_{j+1}$ to become blue given that $C_j$ has just been colored blue. By a standard Chernoff Bound argument, with exponentially high probability we need $s\cdot \Omega\bigp{\log \frac{n}{2s+1}} = \Omega\bigp{r\log \frac{n}{r}}$ steps to color all the vertices blue, implying that $\ept(G) \ge c\cdot r\log\frac{n}{r}$ for some absolute constant $c$.
\end{proof}

\section{Size bounds for general graphs}\label{size}

In this section, we prove sharp upper and lower bounds for the maximum and minimum possible values of $\ept(G)$.

\subsection{An upper bound on expected propagation time}\label{upper}

In this section we prove that $\ept(G) \le \frac{1}{2} n + o(n)$. To highlight a key idea, however, we start with a slick proof that $\ept(G)\le n-1$ using Doob's Optional Stopping Theorem. Note that this result already improves on the bound $\ept(G)\le \frac{e}{e-1}\cdot n$ in Theorem \ref{snipedThm} (b).

\begin{theorem}\label{n-bound}
    We have $\ept(G,S)\le n-\abs{S}.$
\end{theorem}
\begin{proof}
    Let $X_0,X_1,\ldots$ denote random variables such that $X_i$ is the number of blue vertices at time $i$, where $X_0=\abs{S}$. Let the random variable $T$ denote the propagation time, namely the smallest index $i$ for which $X_i = n$. It is clear that $T$ is a valid stopping time because $T\le i$ for any fixed $i$ if and only if $X_i = n$. Consider the sequence of random variables $M_0,M_1,\ldots$ defined by $M_n = X_n - n,$ noting that $M_0 = X_0 - 0 = \abs{S}$. We claim that as long as not all the vertices are blue, then the expected number of blue vertices increases by at least 1 from the current step to the next step. This proves the claim.
    
    \begin{claim}
    Suppose that $X_i < n$. Then $\bE[X_{i+1}\mid X_i] \ge X_i + 1.$
    \end{claim}
    
    To prove the claim, we first note that there exists at least 1 blue vertex $u$ with a white neighbor. Letting $b\ge 0$ denote the number of blue neighbors of $u$ and $w\ge 1$ the number of white neighbors of $u,$ the probability that any given white neighbor of $u$ turns blue is at least $\frac{b+1}{b+w}\ge \frac{1}{w}.$ By linearity of expectation over all $w$ white neighbors, we see that the expected number of blue neighbors of the vertex $u$ increases by at least 1.
    
    Returning to the proof of Theorem \ref{n-bound}, we first claim that $\{M_i\}_{i=0}^\infty $ is a submartingale with respect to $\{X_i\}_{i=0}^\infty.$ Indeed, by the claim we have
    \begin{align*}
        \bE[M_{i+1}\mid X_0,X_1,\ldots,X_i]
        &= \bE[M_{i+1}\mid X_i]
        = \bE[X_{i+1}\mid X_i]-(i+1)
        \ge X_i - i
        = M_i.
    \end{align*}
    Note also that $\abs{M_i-M_{i-1}}$ is uniformly bounded across all $i\ge 0$ by $n+1$. Finally, the fact that $\bE[T]<\infty$ simply follows from $\ept(G,X_0)$ being finite, which can be deduced from the claim. Hence we can apply Doob's Optional Stopping Theorem to get
    \begin{align*}
        \bE[M_T]
        &= \bE[X_T] - \bE[T]
        = n - \bE[T]
        \ge \bE[M_0]
        = \abs{S}
    \end{align*}
    and $\ept(G,S) = \bE[T] \le n-\abs{S}$, as desired.
\end{proof}

\begin{corollary}\label{n-1-bound}
We have $\ept(G)\le n-1.$
\end{corollary}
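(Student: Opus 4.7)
The plan is to apply Theorem \ref{n-bound} directly with the specialization $|S| = 1$. Since $G$ is connected and has $n$ vertices, we may pick any single vertex $v \in V(G)$ and set $S = \{v\}$. By definition of $\ept(G)$ as the minimum expected propagation time over starting singletons, we have
\[
\ept(G) = \min_{v \in V(G)} \ept(G, \{v\}) \le \ept(G, \{v\}),
\]
for any choice of $v$. By Theorem \ref{n-bound} applied to $S = \{v\}$, the right-hand side is bounded by $n - |S| = n - 1$, which yields the claimed inequality $\ept(G) \le n - 1$.

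There is essentially no obstacle here; the corollary is an immediate specialization of the main theorem to the case of a single initial blue vertex. The only thing worth noting is that Theorem \ref{n-bound} requires $S \ne \emptyset$ (so that the propagation can start), but this is automatic for $|S| = 1$. Thus the proof is a one-line deduction.
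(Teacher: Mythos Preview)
Your proof is correct and takes essentially the same approach as the paper: apply Theorem \ref{n-bound} with $S=\{v\}$ for an arbitrary vertex $v$, and then use the definition $\ept(G)\le \ept(G,\{v\})$.
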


\begin{remark}
Note that Theorem \ref{n-bound} is tight in the following sense. Fix a value of $\abs{S}.$ Then construct a graph $G$ that is a path of $n$ vertices from left to right. Let $S$ be the set consisting of the leftmost $\abs{S}$ vertices. Then $\ept(G, S) = n-\abs{S}$.
\end{remark}

We introduce a lemma that gives conditions on the current set of blue vertices regarding when the expected number of blue vertices increases by at least 2. Recall from the claim in Theorem \ref{n-bound} that as long as not all the vertices are blue, the expected number of blue vertices increases by at least 1 at the current step.

\begin{lemma}\label{2-lemma}
    Let $v_1, \dots, v_k$ be the blue vertices that are connected to at least 1 white vertex, and $w_1, \dots, w_\ell$ be the white vertices that are connected to at least 1 blue vertex. If $k, \ell \ge 3,$ then one of the following must be true:
\begin{itemize}
    \item With at most one exception, all of the $v_i$'s satisfy $\deg_w (v_i) = 1.$ In addition, some white vertex is connected to all of the $v_i$'s.
    \item The expected number of blue vertices after a single iteration of probabilistic zero forcing increases by at least 2.
\end{itemize}    
\end{lemma}

\begin{proof}
    Order the $v_i$'s in decreasing order of $\deg_w (v_i).$ First, we show that $\deg_w (v_1) \ge 3$ and $\deg_w (v_2) \ge 2$ implies that the expected number of new blue vertices is at least 2. Let $a := \deg_w (v_1)$, $b := \deg_w (v_2)$, and $c := \deg_w (v_3).$ Then, vertex $v_1$ will convert $a$ vertices $w_j$ to blue each with probability $\frac{1}{a},$ vertex $v_2$ will convert $b$ vertices $w_j$ to blue each with probability at least $\frac{1}{b} (1 - \frac{1}{a}),$ and vertex $v_3$ will convert $c$ vertices $w_j$ to blue each with probability $\frac{1}{c} \bigp{1 - \frac{1}{a}}\bigp{1 - \frac{1}{b}}.$ Here we are saying that vertex $v_i$ converts $w_j$ to blue if both $v_i$ propagates to $w_j$ and no $v_{i'}$ propagates to $w_j$ for any $i' < i.$ Therefore, since $a \ge 3$ and $b \ge 2,$ the expected number of additional blue vertices is at least
    \begin{align*}
        a \cdot \frac{1}{a} + b \cdot \frac{1}{b} \cdot \left(1 - \frac{1}{a}\right) + c \cdot \frac{1}{c} \cdot \left(1 - \frac{1}{a}\right)\left(1 - \frac{1}{b}\right)
        &= 1 + \left(1 - \frac{1}{a}\right) + \left(1 - \frac{1}{a}\right)\left(1 - \frac{1}{b}\right)
        \\&\ge 1 + \frac{2}{3} + \frac{2}{3} \cdot \frac{1}{2}
        \\&= 2.
    \end{align*}

    Assume the expected number of new blue vertices is less than 2. Then, either $\deg_w (v_1) \le 2$ or $\deg_w (v_2) = 1.$ In the latter case, we have that $\deg_w (v_i) = 1$ for all $i \ge 2.$ If any $v_i, v_{i'}$ for $i, i' \ge 2$ are connected to different white vertices, then both white vertices will be forced blue, contradicting our assumption. Thus, we can fix some $j$ and say $v_i$ is connected to $w_j$ for all $i \ge 2.$ If $v_1$ is not connected to $w_j,$ then $v_1$ will in expectation convert at least 1 white vertex to blue, and $w_j$ will be converted to blue with probability $1$. This contradicts our assumption, so $w_j$ must be connected to all $v_i$'s, even for $i = 1.$
    
    The final case is that $\deg_w (v_1) \le 2$, so $\deg_w (v_i) \le 2$ for all $i$. In this case, each $w_j$ will become blue with probability at least $\frac{1}{2},$ so we must have $\ell \le 3,$ and therefore $\ell = 3.$ Moreover, if some $v_i$ had $\deg_w (v_i) = 1,$ then some $w_j$ will become blue with probability $1$, which will make the expected number of new blue vertices at least 2. Therefore, $\deg_w (v_i) = 2$ for all $i$, so $\displaystyle \sum_{i = 1}^{k} \deg_w (v_i) = 2k.$ Since $\ell = 3,$ and $\deg_v (w_j) \le k$ for all $k$, there must be at least two indices $j$ such that $\deg_v (w_j) \ge 2,$ or else $\displaystyle \sum_{j = 1}^{\ell} \deg_v (w_j) \le k + 2 < 2k$ since $\ell = 3$ and $k \ge 3.$ However, for each index $j$ such that $\deg_v (w_j) \ge 2,$ we have the probability of $w_j$ becoming blue is at least $\frac{3}{4},$ and since $\ell = 3,$ we thus have the expected number of vertices that become blue is at least 2. This completes the proof.
\end{proof}

\begin{definition}
    Define a vertex $v$ of $G$ to be a \emph{cut-vertex} if removing $v$ and all edges with an endpoint at $v$ from $G$ causes $G$ to become disconnected.
\end{definition}

\begin{definition}
    Define a pair $(v, w)$ of vertices in $G$ to be a \emph{cornerstone} if the following two conditions hold:
\begin{enumerate}
    \item Removing $v, w$ and all edges with an endpoint at $v$ or $w$ from $G$ causes $G$ to become disconnected
    \item Either $v$ and $w$ are either connected by an edge or share a common neighbor in $G$.
\end{enumerate}
\end{definition}

\begin{definition}\label{cornerstone}
    If $v$ is a cut-vertex, consider all pairs of disjoint subsets $(S, T)$ such that $S \cup T = V \backslash \{v\}$ and there are no edges between $S$ and $T$. Define $g(v)$ to equal the minimum over all possible pairs of $\max(\abs{S}, \abs{T}).$ If $v$ is not a cut-vertex, define $g(v) = n-1.$
    
    Likewise, if $(v, w)$ is a cornerstone, consider all pairs of disjoint subsets $(S, T)$ such that $S \cup T = V \backslash \{v, w\}$ and there are no edges between $S$ and $T$. Define $g(v,w)$ to equal the minimum over all possible pairs of $\max(\abs{S}, \abs{T}).$ If $(v, w)$ are connected or have a common neighbor but do not form a cornerstone, define $g(v,w) = n-2.$
\end{definition}

We consider the following modified algorithm.

\begin{enumerate}
    \item Choose a single vertex $v$ or a pair of vertices $v, v'$ sharing either an edge or a common neighbor, such that the value of $g(v)$ or $g(v, v')$ in Definition \ref{cornerstone} is minimized. If we chose a single vertex, pick sets $S, T$ such that $\abs{S} \le \abs{T} \le g(v),$ $S \cap T = \emptyset,$ $S \cup T = G \backslash \{v\}$, and there are no edges between $S$ and $T$. Likewise, if we chose a pair of vertices, pick $S, T$ in the same way except that $S \cup T = G \backslash \{v, v'\}.$ Note that $S$ may be empty, if there are no cut-vertices or cornerstones.
    \item Initialize with $v$ blue and all other vertices white.
    \item Pick some arbitrary ordering $v_1, \dots, v_n$ of the vertices, where $v, v'$ can be labeled with any number.
    \item Run the probabilistic zero forcing process until all neighbors of $v$ are blue, and, if we picked a cornerstone or pair of vertices $v, v'$ in Step (1), until all neighbors of $v'$ as well.
    \item Turn all vertices that are not $v, v',$ or any of their neighbors white.
    \item Run the probabilistic zero forcing algorithm on the induced subgraph $G[T]$, until the number of white vertices in $T$ is at most $\abs{S}+3$.
    \item At each step, suppose there is some blue vertex in $G[T]$ with $k \ge 1$ white neighbors in $G[T]$. Then, choose such a blue vertex $v_i \in G[T]$ with the smallest index $i$, and run probabilistic zero forcing but where each white neighbor of $v_i$ becomes blue with probability $1$ if $v_i$ only has 1 white neighbor, and becomes blue with probability $\frac{4}{3k}$ otherwise. If there is no such vertex in $G[T]$, do nothing.
    Run this same procedure on $G[S]$ simultaneously.
\end{enumerate}

Note that by Lemma \ref{GenesonCoupling}, Step (5) will only increase the total expected propagation time. Hence it suffices to bound the expected runtimes of Steps (4), (6), and (7) and show that their sum is at most $\frac{n}{2} + O(\log n),$ which will imply the desired bound on expected propagation time.

\begin{lemma}\label{step-4}
    Step \emph{(4)} takes $O(\log n)$ time in expectation.
\end{lemma}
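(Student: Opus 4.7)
The plan is to couple Step (4) with propagation on a star graph and invoke Lemma \ref{StarExponentialTailBound} a constant number of times via Lemma \ref{OurCoupling}. Begin with the $1$-cornerstone case, where only $v$ is initially blue and we need to blue all of $N(v)$. Define a modified process on $G$ that suppresses every forcing except those from $v$ to its neighbors, using the rate $(1 + \deg_b^{\text{mod}} v)/\deg v$, counting only those neighbors of $v$ blued by the modified process. Since the modified process blues only a subset of what the actual process blues, these rates are pointwise at most the true probabilistic zero forcing rates, so by Lemma \ref{OurCoupling} the actual time to blue $N(v)$ is stochastically dominated by the corresponding time in the modified process. But the modified process is exactly probabilistic zero forcing on the star with center $v$ and $|N(v)| \le n-1$ leaves, so Lemma \ref{StarExponentialTailBound} tells us all leaves are blue after $C\log(n+1) + t$ steps with probability at least $1 - \alpha^t$; summing the geometric tail gives $O(\log n)$ expected time.

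For the $2$-cornerstone case $(v, v')$, I would decompose Step (4) into at most three phases separated by stopping times. The first phase waits until $N(v)$ is fully blue, which takes $O(\log n)$ expected time by the argument above. If $v' \in N(v)$ we may skip the middle phase; otherwise $v$ and $v'$ share a common neighbor $u$, which is now blue, and a second application of the star coupling centered at $u$ (where $v'$ is a leaf) gives $v'$ blue in another $O(\log n)$ expected time. The final phase centers the star coupling at $v'$ to blue all of $N(v')$ in $O(\log n)$ more expected time. Here Lemma \ref{OurCoupling} is applied afresh at each stopping time by conditioning on the history, which is valid since the actual forcing rates dominate the coupled star rates regardless of what has happened earlier.

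The main subtlety to watch is that when starting later phases, some leaves of the chosen star may already be blue from an earlier phase; however this only accelerates propagation relative to the ``center only blue'' configuration of Lemma \ref{StarExponentialTailBound}, so its bound still applies as an upper bound. Summing the $O(\log n)$ contributions from the at most three phases yields the claimed $O(\log n)$ expected time for Step (4).
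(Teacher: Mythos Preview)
Your proof is correct but takes a somewhat different route from the paper's. The paper simply invokes Lemma~\ref{RadiusBash} directly: every vertex $w$ at distance at most $3$ from $v$ (which covers $v'$, all neighbors of $v$, and all neighbors of $v'$) is blue after $O(\log n) + t$ steps with probability at least $1 - \beta^t$, and a union bound over the at most $n$ such vertices finishes. Your argument instead decomposes Step~(4) into at most three sequential star-coupling phases (centered at $v$, then at a common neighbor $u$, then at $v'$) and applies Lemma~\ref{StarExponentialTailBound} together with Lemma~\ref{OurCoupling} to each phase separately. This is essentially the $s \le 3$ special case of the \emph{proof} of Lemma~\ref{RadiusBash}, carried out by hand. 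The paper's route is shorter because Lemma~\ref{RadiusBash} has already packaged the phase-by-phase star coupling and exponential tail bookkeeping; your route is more self-contained and has the minor advantage of avoiding the final union bound, since each star phase blues an entire neighborhood at once rather than a single target vertex.
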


\begin{proof}
    By Lemma \ref{RadiusBash}, any vertex $w$ of distance at most 3 from $v$ will be blue after $C \log n + t$ steps with probability at least $1 - \beta^t$. Therefore, for some $C' > C,$ after $C' \log n + t$ steps, each vertex $w$ of distance at most 3 from $v$ will be blue with probability at least $1 - \frac{1}{n} \cdot \beta^t,$ so the probability that all such vertices are blue is at least $1 - \beta^t.$ Since $v'$ has distance at most 2 from $v$, all neighbors of both $v$ and $v'$ will be blue if all vertices of distance at most 3 from $v$ are blue. Therefore, the expected time is $O(\log n)$.
\end{proof}

\begin{lemma}\label{step-6}
    Step \emph{(6)} takes at most $\frac{1}{2}(\abs{T}-\abs{S})$ steps in expectation.
\end{lemma}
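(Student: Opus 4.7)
The plan is to show that, while Step (6) has not yet terminated (i.e.\ while $W_t > |S|+3$, where $W_t$ denotes the number of white vertices in $G[T]$ at time $t$), the expected number of new blue vertices in $G[T]$ per step is at least $2$. Combined with Doob's Optional Stopping Theorem applied to the supermartingale $M_t = W_t + 2(t \wedge N)$, where $N$ is the first time $W_N \le |S|+3$, this will yield $2\,\EE[N] \le W_0 - \EE[W_N] \le |T| - |S|$, using $W_0 \le |T|$ and a lower bound on $\EE[W_N]$ coming from the stopping rule.

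The main claim, $\EE[W_t - W_{t+1} \mid \mathcal{F}_t] \ge 2$ while $W_t > |S|+3$, will be proved by applying Lemma \ref{2-lemma} to the state of $G[T]$. Let $k$ and $\ell$ be the number of blue boundary and white boundary vertices in $G[T]$ respectively. When $k,\ell \ge 3$ and the degenerate configuration from Lemma \ref{2-lemma} does not occur, the lemma yields the desired bound immediately. The remaining cases are the degenerate configuration (a common white vertex $w$ adjacent to every blue boundary vertex, with all but possibly one having $\deg_w = 1$) and the small-boundary cases $k \le 2$ or $\ell \le 2$. The main obstacle is ruling out these exceptional configurations, which is done by leveraging the minimality of $g(v)$ or $g(v, v')$ in Step 1 of the algorithm: in each such configuration, the blue and white regions of $G[T]$ are separated by a set of at most $2$ vertices, which together with $v$ or $v'$ would form a $1$- or $2$-cornerstone of the full graph $G$ with strictly smaller $g$-value than was chosen in Step 1, contradicting minimality. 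Carrying this reduction out is the technical heart of the argument: one must verify the adjacency/common-neighbor condition in Definition \ref{cornerstone}, and split into subcases depending on whether Step 1 picked a $1$- or $2$-cornerstone and where in $N(v) \cup N(v')$ the separator vertices lie.

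With the key inequality in hand, $M_t = W_t + 2(t \wedge N)$ is a supermartingale with uniformly bounded increments (bounded by $|T|+2$), and its stopping time $N$ has finite expectation, for instance by Corollary \ref{n-1-bound} applied to $G[T]$. Doob's Optional Stopping Theorem therefore yields $\EE[M_N] \le M_0 = W_0$, i.e., $2\,\EE[N] \le W_0 - \EE[W_N]$. To obtain the stated bound I use $W_0 \le |T|$ together with the lower bound $\EE[W_N] \ge |S|$ coming from the stopping criterion $W_N \le |S|+3$ combined with a constant bound on the one-step drop in precisely those boundary configurations handled by the cornerstone reduction above. This gives $\EE[N] \le \tfrac{1}{2}(|T| - |S|)$, as required.
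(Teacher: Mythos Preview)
Your overall plan matches the paper's: show that while $W_t > |S|+3$ the expected one-step increase in blue vertices is at least $2$, using Lemma~\ref{2-lemma} together with a contradiction to the minimality of the cornerstone chosen in Step~(1), and then finish with Doob's Optional Stopping Theorem applied to $X_t-2t$ (equivalently your $W_t+2t$). Two parts of your sketch, however, do not work as written.

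First, the cornerstone contradiction is not what you describe. You say the separator vertices ``together with $v$ or $v'$'' form a cornerstone and that one must track ``where in $N(v)\cup N(v')$ the separator vertices lie.'' In the paper's argument the one or two separator vertices are a cornerstone of $G$ \emph{by themselves}; no vertex $v$ or $v'$ is adjoined, and the separator can lie arbitrarily deep inside $T$, far from $N(v)\cup N(v')$. The observation you are missing is that any vertex of $T$ still white during Step~(6) has no edge to $v$ or $v'$ (otherwise it would already be blue after Step~(5)) and no edge to $S$ (by the choice of $S,T$). Hence removing the separator from the full graph $G$ already disconnects the remaining white part of $T$ from everything else in $G$, giving a partition both of whose sides are strictly smaller than $|T|=g(v)$ (or $g(v,v')$); this contradicts the minimality from Step~(1). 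The adjacency/common-neighbour hypothesis for the $2$-cornerstone cases is then checked directly: two blue boundary vertices either share a white neighbour (and form a $2$-cornerstone) or do not (and then each contributes expected gain $\ge 1$); two white boundary vertices with a common blue neighbour automatically satisfy it; and in the degenerate configuration of Lemma~\ref{2-lemma} the distinguished blue vertex and the common white vertex are adjacent. Your proposed mechanism (adjoin $v$ or $v'$ to the separator) cannot work in general, since when the separator already has two vertices you would obtain a triple, which is not a cornerstone in the sense of Definition~\ref{cornerstone}.

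Second, your route to $\EE[W_N]\ge |S|$ via ``a constant bound on the one-step drop in precisely those boundary configurations handled by the cornerstone reduction'' does not hold. Those boundary configurations are exactly the ones that are \emph{ruled out} by the minimality argument; the step that actually crosses the threshold $|S|+3$ takes place in a generic configuration where many vertices can turn blue simultaneously, so the one-step drop is not bounded by an absolute constant. The paper does not claim any overshoot control here; it simply invokes the submartingale $X_t-2t$ and Doob's theorem in the style of Theorem~\ref{n-bound}.
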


\begin{proof}
    Suppose we are running the algorithm on $G[T]$ and the number of white vertices before some iteration is $k \ge \abs{S}+3.$ We show that the expected number of new blue vertices after the next step will be at least 2.
    
    If there is exactly 1 blue vertex $w$ connected to any white vertex in $G[T]$, then if we remove $w,$ the white vertices in $G[T]$ and blue vertices will be disconnected. Moreover, the white vertices in $G[T]$ do not share edges with $v$ or $v',$ or else they would have been colored blue at Step (3). Finally, $S$ and $T$ are disconnected, so in fact removing $w$ from the original graph $G$ causes the white vertices in $T$ to be disconnected from all other vertices in $G.$ However, the number of white vertices in $G[T]$ is at least $\abs{S}+3$ and the other disconnected component contains $S$ and $v$, so $w$ is a cut-vertex with $g(w) > \abs{S},$ which is a contradiction because $v$ was chosen among all cut-vertices and cornerstones to minimize the value of the function $g.$ Likewise, if there is exactly 1 white vertex $w$ connected to any blue vertices in $G[T],$ $w$ will form a cut-vertex for the same reason, as the remaining white vertices in $G[T]$ cannot be connected to any other vertices in $G$. However, the number of white vertices is at least $\abs{S}+3$ and the other disconnected component contains $S$ and $v$, so $w$ is a cut-vertex with $g(w) > \abs{S},$ which is a contradiction.
    
    If we have exactly two blue vertices $w_1, w_2$ connected to any white vertices in $G[T]$, then removing $w_1$ and $w_2$ will cause the white vertices in $G[T]$ to be disconnected from the rest of the graph, by the same argument as in the previous paragraph. This means that if $w_1$ and $w_2$ have a common neighbor, then $g(w_1, w_2) > \abs{S},$ which is a contradiction. Otherwise, $w_1$ and $w_2$ have no common neighbor, so each of $w_1$ and $w_2$ turns at least 1 white vertex blue in expectation.
    
    Otherwise, there are at least 3 blue vertices connected to any white vertices in $G[T].$ Suppose there are at most 2 white vertices connected to any blue vertices in $G[T].$ The first case is that each blue vertex is only connected to exactly 1 white vertex, in which case these 2 white vertices will turn blue and we have an expected increase of 2 in the number of new blue vertices. The second case is that there exists a blue vertex connected to 2 white vertices $x_1, x_2$, in which case the two white vertices form a cornerstone. Since there are at least $\abs{S}+3$ white vertices remaining assuming that we are not done with Step (6), we have $g(x_1, x_2) > \abs{S},$ contradiction.
    
    The last case is that there are at least 3 blue vertices connected to white vertices and at least 3 white vertices connected to blue vertices. By Lemma \ref{2-lemma}, we are done unless all blue vertices except for a blue vertex $w_1$ is connected to exactly 1 white neighbor, and there is some white vertex $x_1$ connected to all blue vertices. What this means is that $w_1$ is connected to all white vertices $x_1,x_2,\ldots,x_k$ that are adjacent to blue vertices and that none of $x_2,x_3,\ldots,x_k$ are connected to a blue vertex that is not $w_1.$ However, this implies that $(w_1, x_1)$ is a cornerstone with $g(w_1,x_1) > \abs{S},$ which is a contradiction.
    
    We conclude that in all cases the expected number of blue vertices after the next step will be at least 2. The result that Step (6) takes at most $\frac{1}{2} (\abs{T} - \abs{S})$ follows from a Doob's Optional Stopping Theorem argument that is the same as the proof of Theorem \ref{n-bound}. In particular, we use a submartingale that at time $t$ is defined to be $X_t - 2t,$ where we recall that $X_t$ is the number of blue vertices at time $t.$
\end{proof}

\begin{lemma}\label{step-7}
    Step \emph{(7)} takes $\abs{S} + O(1)$ steps in expectation.
\end{lemma}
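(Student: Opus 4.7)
The plan is to analyze Step 7 by decomposing it into two independent subprocesses on the vertex-disjoint subgraphs $G[T]$ and $G[S]$. Let $T_T$ and $T_S$ denote the random numbers of steps until $G[T]$ and $G[S]$, respectively, become entirely blue under Step 7's modified rule. Because $S \cap T = \emptyset$ and the coin flips governing propagation within $G[T]$ and $G[S]$ at any given step are independent, $T_T$ and $T_S$ are independent random variables, and Step 7 terminates at $\tau := \max(T_T, T_S)$. Write $W_0^T$ and $W_0^S$ for the numbers of whites in $G[T]$ and $G[S]$ at the start of Step 7; by Step 6's stopping condition, $W_0^T \le |S|+3$, and trivially $W_0^S \le |S|$.

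First I would bound $\EE[T_T]$ and $\EE[T_S]$ by applying Doob's Optional Stopping Theorem to the submartingale $B_t^T - t$, where $B_t^T$ denotes the blue count in $G[T]$ at time $t$, exactly as in the proof of Theorem \ref{n-bound}. At each step with $G[T]$ still active, the expected number of new blues in $G[T]$ equals $1$ when the selected vertex has $k=1$ white neighbor and equals $k \cdot \tfrac{4}{3k} = \tfrac{4}{3}$ when $k \ge 2$, so in either case at least $1$. Hence $\EE[T_T] \le W_0^T \le |S|+3$, and symmetrically $\EE[T_S] \le W_0^S \le |S|$.

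The naive union bound $\EE[\tau] \le \EE[T_T] + \EE[T_S] \le 2|S|+3$ is too weak. The main obstacle is to sharpen it via a concentration argument establishing $\EE[(T_T - W_0^T)^+] = O(1)$, and analogously for $T_S$. The key observation is that $k=1$ steps are deterministic (contribute zero variance), while $k \ge 2$ steps decrement $W_t^T$ by $\mathrm{Bin}(k,\tfrac{4}{3k})$ with mean $\tfrac{4}{3}$ and conditional variance at most $\tfrac{4}{3}$. A case split on the number $N_2$ of $k \ge 2$ steps shows that either $N_2$ is small, in which case $T_T$ is nearly deterministically equal to $W_0^T$ and the positive tail has trivial expectation, or $N_2$ is large, in which case $\EE[T_T] = W_0^T - \tfrac{1}{3} \EE[N_2]$ lies far enough below $W_0^T$ that Freedman's martingale tail inequality applied to the excess yields $\BP(T_T > W_0^T + u) \le e^{-\Omega(u)}$. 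Integrating in either regime gives $\EE[(T_T - W_0^T)^+] = O(1)$.

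Finally, setting $M := \max(W_0^T, W_0^S) \le |S|+3$ and using $W_0^T, W_0^S \le M$, we have $(\tau - M)^+ \le (T_T - M)^+ + (T_S - M)^+ \le (T_T - W_0^T)^+ + (T_S - W_0^S)^+$, so
\begin{equation*}
\EE[\tau] \le M + \EE[(T_T - W_0^T)^+] + \EE[(T_S - W_0^S)^+] \le |S| + O(1),
\end{equation*}
as desired.
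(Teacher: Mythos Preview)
Your overall architecture matches the paper's proof exactly: decompose Step~(7) into independent processes on $G[T]$ and $G[S]$, prove an exponential tail bound of the form $\BP(T_T > W_0^T + u) \le e^{-\Omega(u)}$ so that $\BE[(T_T - W_0^T)^+] = O(1)$ (and likewise for $T_S$), and then combine via $\max(\tau,M) \le M + (T_T - W_0^T)^+ + (T_S - W_0^S)^+$ to get $|S|+O(1)$. The only substantive difference is in how the tail bound is obtained.

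The paper argues the tail bound cleanly via an exponential supermartingale: with $X_t$ the blue count in $T$, one checks that $Y_t := C^{\,t - X_t}$ is a supermartingale for the constant $C>1$ solving $e^{(4/3)(1-1/C)} = C$ (the $k=1$ case gives equality, and for $k\ge 2$ one uses $(1 - \tfrac{4}{3k}(1-\tfrac1C))^k \le e^{-(4/3)(1-1/C)} = 1/C$). Truncated optional stopping then yields $\BE[C^{T_T}] \le C^{W_0^T}$, hence $\BP(T_T \ge W_0^T + u) \le C^{-u}$ by Markov.

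Your proposed route---a case split on the random variable $N_2$ together with Freedman's inequality---has a genuine gap as written. You cannot split into ``$N_2$ small'' versus ``$N_2$ large'' and then invoke the \emph{unconditional} identity $\BE[T_T] = W_0^T - \tfrac13\BE[N_2]$ inside one of the cases; $N_2$ is correlated with the very tail event you are trying to control, and knowing that the unconditional mean sits below $W_0^T$ does not, by itself, bound the positive tail. Moreover, a direct application of Freedman to the supermartingale $D_t := t - (W_0^T - W_t^T)$ runs into the problem that the predictable quadratic variation $V_t \le \tfrac43 t$ is itself unbounded at the (random) termination time. One can repair this---e.g.\ observe that $\{T_T > W_0^T + u\}$ forces $D_{W_0^T+u} > u$, then union-bound Freedman over a range of times---but the clean way to execute that repair is precisely to pass to the exponential supermartingale $e^{\lambda D_t}$, which is equivalent to the paper's $C^{t-X_t}$. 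So your plan reaches the right conclusion, but the concentration step needs to be redone along the paper's lines (or with a considerably more careful Freedman argument than the one you sketched).
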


\begin{proof}
    First, consider the process only on the $G[T]$ side. Suppose that there are $m$ white vertices in $G[T]$ at the beginning of step (7). Let $C \approx 1.8328$ be a solution to the equation $e^{4/3 \cdot (1 - 1/C)} = C$ over $\bR$. If we let $T_t$ be the set of blue vertices in $T$ after $t$ steps and let $X_t = \abs{T_t},$ we consider the supermartingale $Y_t := C^{t-X_t}$ with respect to $T_0, T_1, \dots$.
    
    To show that this process is a supermartingale, first suppose that $T_0, \dots, T_t$ are known and $T_t$ is such that the blue vertex selected at the $(t+1)^{\text{th}}$ step has exactly 1 white neighbor. Then, $X_{t+1} = X_t+1$ so in fact $C^{(t+1)-X_{t+1}} = C^{t-X_t}.$ Otherwise, if the blue vertex selected has exactly $k \ge 2$ white neighbors, then since each white neighbor becomes blue with probability $\frac{4}{3k}$ independently, we have
\begin{align*}
\bE[C^{(t+1)-X_{t+1}} \mid T_0, T_1, \dots, T_t] &= C^{(t+1)-X_t} \cdot \left(\left(1 - \frac{4}{3k}\right) + \frac{4}{3k} \cdot \frac{1}{C}\right)^{k} \\
&= C^{(t+1)-X_t} \cdot \left(1 - \frac{4 (1 - \frac{1}{C})}{3k}\right)^k \\
&\le C^{(t+1)-X_t} \cdot e^{-4/3 \cdot (1 - 1/C)} \\
&= C^{(t+1)-X_t} \cdot \frac{1}{C} = C^{t-X_t}.
\end{align*}
    
    Unfortunately, the function $Y_t = C^{t-X_t}$ can have $\abs{Y_{t+1}-Y_t}$ arbitrarily large, so we cannot directly use Doob's Optional Stopping Theorem. However, we note that if we force the algorithm to stop after $\ell$ steps, then we will be able to use Doob's Optional Stopping Theorem. Namely, if $\tau$ is the amount of time needed until all vertices in $T$ are blue, then Doob's Optional Stopping Theorem will give us that for any $\ell \in \bN,$
\[\bE[Y_{\min(\tau, \ell)}] = \bE\left[C^{\min(\tau, \ell) - X_{\min(\tau, \ell)}}\right] \le Y_0 = C^{-X_0},\]
    where $X_0$ is the number of blue vertices in $T$ at the beginning of the process. Since $X_{\min(\tau, \ell)} \le \abs{T},$ we thus have
$\bE[C^{\min(\tau, \ell) - \abs{T}}] \le C^{-X_0}$, which implies $\bE[C^{\min(\tau, \ell)}] \le C^{\abs{T}-X_0}$.
    Therefore, by setting $\ell = \abs{T}-X_0 + k$ for some $k \in \bN,$ we obtain by Markov's Inequality that $\bP(\tau \ge \abs{T}-X_0+k) \le C^{-k},$ so we have $\bE[\max(0, \tau-\abs{T}+X_0)] \le D$ for some fixed constant $D$. Since $\abs{T}-X_0 \le \abs{S}+3,$ and the number of white vertices in $G[T]$ at the beginning of step (7) was at most $\abs{S}+3,$ we must have $\bE[\max(0, \tau-(\abs{S}+3))] \le D,$ so $\bE[\max(0, \tau-\abs{S})] \le D+3.$
    
    If we run the same procedure on $G[S]$ and call this stopping time $\tau',$ we likewise obtain $\bE[\max(0, \tau'-\abs{S})] \le D$ for the same value of $D.$ Therefore,
\begin{align*}
\bE[\max(\tau, \tau)] &\le \bE[\max(\tau, \tau', \abs{S})] \\
&= \abs{S}+\bE[\max(0, \tau-\abs{S}, \tau'-\abs{S})] \\ 
&\le \abs{S}+\bE[\max(0, \tau-\abs{S})] + \bE[\max(0, \tau'-\abs{S})] \\ 
&\le \abs{S}+2D+3.
\end{align*}
    This completes the proof, since the number of steps equals $\max(\tau, \tau').$
\end{proof}

We observe that by Lemmas \ref{GenesonCoupling} and \ref{OurCoupling}, Step 5 and only doing the propagation from certain vertices with equal or lower probabilities in Steps (6) and (7) cannot decrease the expected propagation time. Linearity of expectation, along with Lemma \ref{step-4}, Lemma \ref{step-6}, Lemma \ref{step-7}, and our observation, yields the following theorem.

\begin{theorem}\label{n/2-bound}
    We have $\ept(G) \le \frac{n}{2} + O(\log n)$.
\end{theorem}

\begin{proof}
    It suffices to observe that
    \begin{align*}
        O(\log n) + \frac{1}{2}\bigp{\abs{T} - \abs{S}} + \abs{S} + O(1)
        &= O(\log n) + \frac{1}{2} \bigp{\abs{T} + \abs{S}}
        = \frac{n}{2} + O(\log n).
    \end{align*}
\end{proof}

\begin{remark}
The factor of $\frac{1}{2}$ in Theorem \ref{n/2-bound} is tight, as demonstrated by the expected propagation time of a path $P_n$ of $n$ vertices, which has been computed to be
$$
    \ept(P_n) = \begin{cases} \frac{n}{2} + \frac{2}{3} & n\equiv 0\bmod 2 \\ \frac{n}{2} + \frac{1}{2} & n\equiv 1\bmod 2.\end{cases}
$$
\end{remark}

\subsection{A lower bound on expected propagation time}\label{lower}

\begin{theorem} \label{lowerbound}
    For a subset $S \subeq V$ of size $k$, we have $\ept(G, S) \ge \log_2 \log_2 (2n) - \log_2 \log_2 (2k).$
\end{theorem}

\begin{proof}
    Note that if at some point in time, there are $k$ blue vertices, then each blue vertex $v$ has at most $k-1$ blue neighbors, and will in expectation, convert at most $k$ points blue. Therefore, the expected number of blue vertices after a single round of probabilistic zero forcing is at most $k+k^2 \le 2k^2,$ regardless of which $k$ vertices were blue.
    
    Let $B_t$ represent the number of blue vertices after $t$ steps, so for example $B_0 = \abs{S}.$ We note that $A_t := \log_2 \log_2 (2B_t) - t$ is a supermartingale. To see why, note that 
\begin{align*}
\bE[A_{t+1} \mid A_0, \dots, A_t] &= \bE[\log_2 \log_2 (2B_{t+1}) \mid B_0, \dots, B_t] - (t+1) \\
&\le \log_2 \log_2 \left(2 \bE[B_{t+1} \mid B_0, \dots, B_t]\right) - (t+1) \\
&\le \log_2 \log_2 (4B_t^2) - (t+1) \\
&= \log_2 \log_2 (2B_t) - t
\\&= A_t.
\end{align*}
    Here, we are using Jensen's Inequality and that $\log_2 \log_2 (2x)$ is concave on the interval $[1, \infty)$.
    
    We know that the $\ept(G, S)$ is finite (in fact, at most $n - \abs{S}$) and that $\abs{A_{t+1}-A_t}$ is absolutely bounded by $$1 + \abs{\log_2 \log_2 (2B_{t+1}) - \log_2 \log_2 (2B_{t})} \le 1 + \log_2 \log_2 (2n).$$ Therefore, we can apply Doob's Optional Stopping Theorem to get that if $\tau$ represents the total number of steps, then $$\bE[A_{\tau}] \le \bE[A_0] = \log_2 \log_2 (2\abs{S}).$$ However, since $A_{\tau} = \log_2 \log_2 (2n) - \tau$, we have $$\log_2 \log_2 (2n) - \bE[\tau] \le \log_2 \log_2 (2\abs{S}),$$ so $$\bE[\tau] \ge \log_2 \log_2 (2n) - \log_2 \log_2 (2\abs{S}).$$
    
    Since $\log_2 \log_2 (2 \cdot 1) = 0,$ we have $\ept(G) \ge \log_2 \log_2 n.$ Also, since $\log_2 \log_2 (2k) \le k$ for all $k \ge 1$, we have that $\abs{S}+\ept(G, S) \ge \log_2 \log_2 (2n),$ implying that $\thpzf(G) \ge \log_2 \log_2 (2n).$
\end{proof}

\begin{corollary}
We have $\thpzf(G) \ge \log_2 \log_2 (2n)$ and $\ept(G) \ge \log_2 \log_2 (2n)$.
\end{corollary}

\begin{proof}
    This follows directly from $\ept(G)\le \thpzf(G).$
\end{proof}

\section{Further directions} \label{Conclusion}
Here we list future directions and open problems that arise in probabilistic zero forcing.

\begin{enumerate}
    \item If one believes a path $P_n$ to be the graph on $n$ vertices with the maximum expected propagation time, one might guess that in general the expected propagation time is at most a constant added to $\frac{n}{2}.$ We leave the interested reader with the following conjecture.

    \begin{conjecture}\label{n/2-conj}
        We have $\ept(G) \le \frac{n}{2} + O(1).$
    \end{conjecture}
    
    \item It is very likely that one can get better bounds on the probabilistic zero forcing throttling number $\thpzf(G).$ There is the following theorem by Geneson and Hogben.
    \begin{theorem}[{\cite[Theorem 6.5]{GH}}]
        Across all graphs $G$, the maximum possible probabilistic throttling number is $\Omega(\sqrt{n})$ and $O(\sqrt{n} \cdot \log^2 n).$
    \end{theorem}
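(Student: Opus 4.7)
The plan is to prove the two bounds separately: the lower bound $\thpzf = \Omega(\sqrt n)$ is witnessed by the family of paths $P_n$, while the upper bound follows by combining a ``centers'' lemma with the radius-propagation bound of Theorem \ref{radius-bound}.

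For the lower bound, fix any initial blue set $S$ of size $k$ in $P_n$. Ordered along the path, the vertices of $S$ together with the two endpoints of $P_n$ partition the path into at most $k+1$ intervals, so by pigeonhole some interval has length $\ell \ge (n-1)/(k+1)$. On such an interval, probabilistic zero forcing must traverse at least $\ell/2$ edges from at least one side, and the argument of Claim \ref{babu-claim} (applied inductively to the sequence of blue vertices at the leading edge of propagation) shows this takes $\Omega(\ell)$ expected time. Hence $\ept(P_n, S) = \Omega(n/k)$, so $\thpzf(P_n, S) \ge k + \Omega(n/k)$, and AM-GM gives $\thpzf(P_n) = \Omega(\sqrt n)$.

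For the upper bound, I first establish a centers lemma: in any connected graph $G$ on $n$ vertices and for every positive integer $k$, there exists $S\subeq V(G)$ with $|S|\le k$ such that every vertex of $G$ is within graph distance $O(n/k)$ of $S$. To see this, fix a spanning tree $T$ of $G$ and perform a DFS traversal of $T$, producing a closed walk of length $2(n-1)$ that visits every vertex and in which consecutive walk-vertices are $T$-adjacent. Selecting every $\lfloor 2n/k\rfloor$-th vertex of the walk yields a set of size $\le k$, and since every vertex of $G$ appears in the walk, each vertex lies within $O(n/k)$ walk-steps—hence within graph distance $O(n/k)$—of some selected vertex.

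Taking $k = \lceil \sqrt n\,\rceil$, we obtain $|S| = O(\sqrt n)$ and $r := \max_{w\in V(G)} d(w, S) = O(\sqrt n)$. For each $w\in V(G)$, let $v(w)\in S$ be a closest representative. By Lemma \ref{RadiusBash} applied to the single-seed start $\{v(w)\}$, after $t + O(r\log(n/r))$ steps, $w$ is blue with probability at least $1 - \beta^t$; by the monotonicity Lemma \ref{GenesonCoupling}, this lower bound holds a fortiori when the full set $S$ is blue initially. Taking $t = \Theta(\log n)$ and union-bounding over the $n$ vertices of $G$ yields $\ept(G, S) = O\bigl(r\log(n/r) + \log n\bigr) = O(\sqrt n \log n)$, and hence $\thpzf(G) \le |S| + \ept(G, S) = O(\sqrt n\log n)$, comfortably inside the claimed $O(\sqrt n\log^2 n)$ target.

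The main obstacle, as I see it, is the centers lemma (easy, but requiring some care to absorb rounding when $k\nmid n$) and, more subtly, justifying that Lemma \ref{RadiusBash} composes correctly across the union bound when multiple seeds propagate in parallel. The latter is really the content of Lemmas \ref{GenesonCoupling} and \ref{OurCoupling}: the true multi-seed process dominates, in the stochastic sense, any one of the single-seed processes used to bound individual vertex blue-times, so the union bound only multiplies error probabilities and absorbs into an additive $O(\log n)$ in $\ept$. The analysis is thus technical rather than deep, and in fact—by using the improved Theorem \ref{radius-bound} in place of the cruder $\ept(G) = O(r\log^2 n)$ of \cite{GH}—the same plan sharpens the upper bound from $O(\sqrt n\log^2 n)$ to $O(\sqrt n\log n)$.
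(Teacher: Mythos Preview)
This theorem is not proved in the present paper: it appears in Section~\ref{Conclusion} purely as a citation of \cite[Theorem~6.5]{GH}, with no accompanying argument. There is therefore no ``paper's own proof'' to compare against; I can only assess the proposal on its merits and against what \cite{GH} presumably does.

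Your upper-bound argument is correct and is almost certainly the same strategy as \cite{GH}: choose $O(\sqrt n)$ centers so that every vertex is within distance $O(\sqrt n)$ of some center, then apply a radius-based propagation bound and union-bound over vertices. The only difference is that you invoke the present paper's improved Lemma~\ref{RadiusBash} and Theorem~\ref{radius-bound} (giving $O(r\log(n/r))$) in place of the $O(r\log^2 n)$ bound available to \cite{GH}, which is exactly why you obtain $O(\sqrt n\log n)$ rather than $O(\sqrt n\log^2 n)$. One small caveat: Lemma~\ref{GenesonCoupling} as stated concerns only the event ``all of $G$ is blue,'' whereas you need monotonicity for the event ``a fixed vertex $w$ is blue.'' The same coupling that proves Lemma~\ref{GenesonCoupling} (and that underlies Lemma~\ref{OurCoupling}) gives this immediately, but strictly speaking you are using a mild extension rather than the lemma as written.

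Your lower-bound argument reaches the right conclusion but the appeal to Claim~\ref{babu-claim} is misplaced: that claim is about star graphs and an $\Omega(\log n)$ delay, not paths. For $P_n$ no probabilistic estimate is needed at all. Since blue can only spread to neighbors, any vertex at graph distance $d$ from $S$ is white for at least the first $d$ steps deterministically; hence $\ptpzf(P_n,S)\ge \max_{w}d(w,S)$. With $|S|=k$, pigeonhole gives a white interval of length at least $(n-k)/(k+1)$, so this eccentricity is at least $(n-k)/(2(k+1))$, and $k+\Omega(n/k)=\Omega(\sqrt n)$ follows. Replace the Claim~\ref{babu-claim} sentence with this deterministic distance observation and the lower bound is clean.
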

    We conjecture that the throttling number is actually $\Theta(\sqrt{n})$ across all graphs $G$. An approachable problem that makes progress toward this conjecture could be the following.
    \begin{conjecture}\label{t}
        Prove that the maximum possible probabilistic throttling number is $\Theta(\sqrt{n})$ among connected \emph{trees}.
    \end{conjecture}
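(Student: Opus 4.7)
The plan is to establish matching bounds on the maximum throttling number over trees.

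\textbf{Lower bound.} The goal is $\max_T \thpzf(T) \ge \Omega(\sqrt{n})$, and I would prove it by taking $T = P_n$. The key observation is that probabilistic zero forcing on a path is deterministic: every blue vertex has at most two neighbors, so as soon as one of them is the unique white neighbor of that blue vertex, it is forced with probability $1$. Given any initial set $Z \subeq V(P_n)$ with $|Z| = k$, the path $P_n \setminus Z$ contains a connected segment of length at least $\lceil (n-k)/(k+1)\rceil$, and on such a segment of length $\ell$ bordered by at most two blue endpoints, the middle vertex deterministically remains white for at least $\lceil \ell/2 \rceil$ steps. Hence $\thpzf(P_n, Z) \ge k + \tfrac{1}{2}\cdot \tfrac{n-k}{k+1}$, and AM-GM gives the optimum value $\Theta(\sqrt{n})$.

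\textbf{Upper bound (strategy).} The harder direction is $\thpzf(T) \le O(\sqrt{n})$ for every tree $T$ on $n$ vertices. The plan is to exhibit an initial blue set $S$ of size $O(\sqrt{n})$ with $\ept(T, S) = O(\sqrt{n})$. First, by a standard recursive-centroid argument on $T$, I would construct a set $S$ with $|S| = O(\sqrt{n})$ such that every connected component of $T \setminus S$ has at most $\sqrt{n}$ vertices. Label the components $C_1, \dots, C_m$ and let $\tau_i$ denote the first time at which all vertices of $C_i$ are blue, so that $\ept(T, S) = \EE[\max_i \tau_i]$.

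\textbf{Bounding $\EE[\max_i \tau_i]$.} Each $C_i$ is a tree of size at most $\sqrt{n}$, and because every vertex in $S$ adjacent to $C_i$ is blue from time $0$, Theorem \ref{n/2-bound} applied to $C_i$ with its boundary effectively seeded gives $\EE[\tau_i] \le \tfrac{\sqrt{n}}{2} + o(\sqrt{n})$. To upgrade this expectation bound to a bound on the maximum, I would prove a concentration result of the form $\Pr[\tau_i \ge C\sqrt{n} + t] \le e^{-\Omega(t)}$ by constructing a supermartingale analogous to the one in Lemma \ref{step-7}---tracking $\lam^{t - X_t^{(i)}}$, where $X_t^{(i)}$ is the number of blue vertices in $C_i$ and $\lam > 1$ is tuned so the expected drift is nonpositive---and applying a truncated Doob's optional stopping argument. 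A union bound over the at most $n$ components then yields $\EE[\max_i \tau_i] = O(\sqrt{n})$.

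\textbf{Main obstacle.} The hardest step is establishing the exponential tail bound on $\tau_i$ uniformly across subtrees. At each extreme this is already understood: for path-like subtrees propagation is deterministic (so $\tau_i \le |C_i|$ surely), and for star-like subtrees Lemma \ref{StarExponentialTailBound} gives exponential concentration directly. The real difficulty is a "mixed" subtree containing both long paths and many branching points, where the drift of the natural supermartingale is small on the path segments but comparatively large at the branching vertices, and balancing the two is delicate. A promising refinement is to further decompose each $C_i$ along a heavy-path decomposition and track a separate exponential functional on each heavy path, combining them via an exponential Azuma--Hoeffding-type bound. If the clean concentration step resists this strategy, a fallback is to settle for a weaker bound $\thpzf(T) = O(\sqrt{n}\,\mathrm{polylog}(n))$ by combining the centroid decomposition with Theorem \ref{radius-bound} directly.
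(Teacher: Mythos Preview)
This statement is a \emph{conjecture} in the paper, not a theorem: it appears in Section~\ref{Conclusion} as an open problem, and the paper offers no proof. The only remarks the authors make are that the lower bound is already established (Proposition~6.3 of \cite{GH} gives $\thpzf(P_n) = \Theta(\sqrt{n})$) and that ``a method that involves rooting a tree and removing sub-trees of size slightly bigger than $\sqrt{n}$'' seems likely to work for the upper bound. So there is no paper proof to compare your proposal against; your centroid-decomposition strategy is exactly the heuristic the authors gesture at, but neither they nor you have turned it into a proof.

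On the technical side, your proposal is explicitly a plan rather than a proof, and you have correctly located the real gap: the uniform exponential tail $\Pr[\tau_i \ge C\sqrt{n} + t] \le e^{-\Omega(t)}$ for arbitrary subtrees $C_i$. Two smaller issues are worth flagging. First, your lower-bound sketch asserts that probabilistic zero forcing on $P_n$ is deterministic; this is not literally true (an isolated blue vertex with two white neighbors forces each with probability $1/2$), though the $\Theta(\sqrt{n})$ conclusion is unaffected and is in any case already in \cite{GH}. Second, invoking Theorem~\ref{n/2-bound} on a component $C_i$ is not quite legitimate as stated: that theorem bounds $\ept(G)$ for an \emph{optimally chosen} single starting vertex in $G$, whereas here the blue boundary of $C_i$ is dictated by $S$ and the process on $C_i$ is coupled to the rest of $T$ through shared vertices of $S$. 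You would need a version of the $n/2$ bound (or at least an $O(n)$ bound with exponential tails) that applies to an arbitrary initial blue set on a tree; Theorem~\ref{n-bound} gives the expectation part, but the tail bound remains the crux, exactly as you identified.
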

    We remark that a path graph achieves the lower bound, as proven in \cite[Proposition 6.3]{GH}. It seems very likely that a method that involves rooting a tree and removing sub-trees of size slightly bigger than $\sqrt{n}$ can solve Conjecture \ref{t}.

    \item Geneson and Hogben \cite{GH}, as well as Chan et al.\! \cite{sniped}, compute exactly or establish bounds for the expected propagation time of specific graphs $G,$ such as paths \cite{GH}, cycles \cite{GH}, spider graphs \cite{GH}, star graphs \cite{GH}, complete bipartite graphs \cite{sniped}, ``sun'' graphs \cite{sniped}, and ``comb'' graphs. In addition, English et al. \cite{snipedround2} provided strong bounds for Erd\H{o}s--Renyi random graphs $G(n, p)$ for a wide range of $p$.
    
    An interesting problem would be to compute or estimate the expected propagation time for the following classes of graphs:
    \begin{itemize}
        \item $d$-regular graphs.
        \item Product graphs. In particular, given two graphs $G$ and $H,$ can one say anything about $\ept(G\times H)$ in terms of $\ept(G)$ and $\ept(H)$?
    \end{itemize}
    
\end{enumerate}

\end{document}